\newcommand{\R}{\mathbb{R}}
\newcommand{\N}{\mathbb{N}}
\newcommand{\set}[1]{ \left\{ #1 \right\} }
\newcommand{\cH}{\mathcal{H}} % calligraph H for Hilbert spaces
\DeclareMathOperator{\diag}{diag}
\newcommand{\kernel}{k}
\newcommand{\fa}{\hbox{ for all }}
\DeclareMathOperator{\diam}{diam}
\newtheorem{theorem}{Theorem}[section]
\newtheorem{remark}{Remark}[section]
\pgfplotsset{compat=1.14}
\title{Kernel methods for center manifold approximation and a data-based version of the Center Manifold Theorem}
\author[1]{B. Haasdonk\thanks{haasdonk@mathematik.uni-stuttgart.de}}
\author[2]{B. Hamzi\thanks{boumediene.hamzi@gmail.com, \href{https://orcid.org/0000-0002-9446-2614}{orcid.org/0000-0002-9446-2614}}}
\author[3]{G. Santin\thanks{gsantin@fbk.eu, \href{https://orcid.org/0000-0001-6959-1070}{orcid.org/0000-0001-6959-1070}}}
\author[1]{D. Wittwar\thanks{dominik.wittwar@mathematik.uni-stuttgart.de}}
\affil[1]{Institute for Applied Analysis and Numerical Simulation, University of Stuttgart, Germany}
\affil[2]{Department of Mathematics, Imperial College London, United Kingdom}
\affil[3]{Center for Information and Communication Technology, Bruno Kessler Foundation, Trento, Italy}
\begin{document}
\maketitle

\begin{abstract}
For dynamical systems with a non hyperbolic equilibrium, it is possible to significantly simplify the study of stability by means of the center manifold theory.
This theory allows to isolate the complicated asymptotic behavior of the system close to the equilibrium point and to obtain meaningful predictions of its behavior by analyzing a reduced order system on the so-called center manifold.

Since the center manifold is usually not known, good approximation methods are important as the center manifold theorem states that the stability properties of the origin of the reduced order system are the same as those of the origin of the full order system.

In this work, we establish a data-based version of the center manifold theorem that works by considering an approximation in place of an exact manifold. Also the error between the approximated and the original reduced dynamics are quantified. 

We then use an apposite data-based kernel method to construct a suitable approximation of the manifold close to the equilibrium, which is compatible with our general error theory. The data are collected by repeated numerical simulation of the full system by means of a high-accuracy solver, which generates sets of discrete trajectories that are then used as a training set. 
The method is tested on different examples which show promising performance and good accuracy. 

\end{abstract}

\section{Introduction}

Center manifold theory plays an important role in the study of the stability of dynamical systems when the equilibrium point is not hyperbolic.  It  isolates the complicated asymptotic behavior by locating the center manifold which is an invariant manifold tangent to the subspace spanned by the eigenspace of eigenvalues on the imaginary axis. Then, the dynamics of the original system  will be essentially determined by the  restriction of this dynamics on the center manifold since the local dynamic behavior ``transverse'' to this invariant manifold is relatively simple as it corresponds to the flows in the local stable (and unstable) manifolds. In practice, one does not compute the center manifold and its dynamics exactly since this requires the resolution of a quasilinear partial differential  equation which is not easily solvable. In most cases of interest, an approximation of degree two or three of the solution is sufficient. Then, the reduced dynamics on the center manifold can be determined, its stability can be studied and then conclusions about the stability of the original system can be obtained. For a detailed treatment of this topic, we refer e.g. to \cite{Carr81,henry,KELLEY1967336,kelley,pliss,sositaisvili}.

In this paper we consider data-based approximations of the center manifold, and our main result proves in great generality that any sufficiently accurate approximation can be used to draw conclusions on the asymptotic stability of the system, thus establishing one direction of a data-based center manifold theorem. 

We then turn our attention to concrete procedures to construct approximants that provide that kind of error control, and to this end we revise and refine a kernel-based approximation method that we introduced in \cite{Haasdonk2020a}. This approximation can be constructed based solely on the knowledge of points on the trajectories of the system close to the equilibrium point, that are obtained in practice by the numerical integration of the system. This method is formulated in the Reproducing Kernel Hilbert Spaces (RKHS) of a strictly positive kernel (see \cite{Aronszajn1950}), which have provided strong mathematical foundations for analyzing dynamical systems in other settings (see e.g. \cite{ALEXANDER2020132520,bh2020a,BH4,BHSIAM2017,BH6,Bruennette2019,Cavoretto2016a,lyap_bh,Haasdonk2020a,bhhokfs2020,bh2020b,klus2020data}).  
Our method explicitly incorporates into the approximant some prior knowledge on the structure of the manifold. This is possible using some RKHS theory, and we prove that this constrained problem admits a unique solution. 
In particular, we show how to control the error of this approximation, and the resulting bound guarantees that this kernel-based approximation is compatible with the general approximation discussed above. 
Moreover, these bounds are such that they can be applied even if the approximant is constructed using a subset of the dataset selected by means of certain greedy methods.

The paper is organized as follows. In Section \ref{sec:center_manifold} we recall the definition of the center manifold and its role in the study of the stability of certain equilibria. In Section \ref{sec:data_based_cm}, we prove a data-based version of the center manifold theorem. In section \ref{sec:kernel} we describe kernel methods to construct a data-based approximation of the center manifold. Some numerical experiments are given in Section \ref{sec: Numerical examples}.

\section{The center manifold}\label{sec:center_manifold}

We consider an open set $D\subset\R^n$ with $0\in D$, and a dynamical system
\begin{equation}\label{eqn:fors}
\dot{x}=f(x),
\end{equation}
where $f : D \rightarrow \R^n$ is a continuously differentiable function and $x:=x(t)\in D$ for all $t$. We are interested in the study of the behavior of this possibly high-dimensional 
system around an equilibrium point, which we may assume w.l.o.g. to be the origin, i.e., $f(0) = 0$.

Here and in the following of this paper, we use the notation $\nabla_x g(x):=\left(\partial_1 g(x), \dots, \partial_n g(x)\right)^T$ and $D_x f(x):=\left(\partial_i f_j(x)\right)_{1\leq i\leq n, 1\leq j\leq m}$ to denote the gradient of a function $g:\R^n\to\R$ and the 
Jacobian matrix of a vector-valued function $f:\R^n\to\R^m$. 
Moreover, a norm $\|\cdot\|$ 
without subscript refers to the Euclidean $2$-norm on $\R^n$ for some $n$, or to the induced norm of a matrix in $\R^{n\times n}$.

Letting $L:=D_x f(x)_{|{x=0}}\in \R^{n\times n}$, we can rewrite \eqref{eqn:fors} as
\begin{align*}
\dot{x}=f(x)=L x+N(x),
\end{align*}
with the nonlinear component $N(x):=f(x) - Lx$, and denote as $\sigma_{\R}(L)$ the set of real parts of the eigenvalues of $L$.

A classical result relates the 
stability of the equilibrium with the spectrum of $L$, and in particular it is known that if all the eigenvalues of $L$ have negative real part, i.e., 
$\sigma_{\R}(L)\subset \R_{<0}$, then the origin is asymptotically stable, and if $L$ has some eigenvalues with positive real parts, then the origin is 
unstable. If  
instead $\sigma_{\R}(L)\subset \R_{\leq0}$ then the linearization fails to determine the stability properties of the origin, and thus the analysis of this 
situation requires 
to employ additional tools.

In this case, we can first use a linear change of coordinates to separate the eigenvalues of $L$ with zero and with negative real part, i.e., we can 
rewrite \eqref{eqn:fors} as\footnote{Please note that $x$ will be a generic variable related to the state of a dynamical system
  and thus may have varying dimension and meaning throughout this paper.} 
\begin{align}\label{eq:full_order_system}
\dot{x}&= L_1 x + N_1(x,y),\\
\dot{y}&= L_2 y + N_2(x,y),\nonumber
\end{align}
where $L_1 \in \R^{d \times d}$ is such that $\sigma_{\R}(L_1)=\{0\}$ and $L_2 \in \R^{m \times m}$ with $m := n-d$ is such that 
$\sigma_{\R}(L_2)\subset\R_{<0}$.   The nonlinear functions $N_1 : \R^d \times \R^m \rightarrow \R^d$ and $N_2 : \R^d \times \R^m \rightarrow \R^m$ are 
continuously differentiable. Intuitively, we expect the stability of the equilibrium to only depend on the nonlinear term $N_1(x,y)$. This intuition turns out 
to 
be correct, and indeed it can be properly formalized by means of the Center Manifold Theorem. 

Before stating this theorem, we start by recalling a sufficient condition for the existence of a center manifold. Observe that the following assumption on the 
smoothness of $N_1, N_2$ require that 
$f\in C^2(D)$.
\begin{theorem}[Theorem 8.1 in \cite{khalil}]\label{th:carr}
Assume that $N_1$ and $N_2$ are twice continuously differentiable on $D$ and are such that
\begin{align}\label{conditions_N1_N2}
N_i(0,0)=0,\;\; D_x N_i(x,0)_{|x=0}=0,\;\; D_y N_i(0,y)_{|y=0}=0,\;\; i = 1, 2.
\end{align}
Furthermore, assume that the eigenvalues of $L_1$ have zero real part and the eigenvalues of $L_2$ have negative real part.

Then there exists a neighborhood $B 
\subset \R^d$ of the origin and a center manifold $h:B\to \R^m$ for  \eqref{eq:full_order_system}, i.e., 
$y=h(x)$ is an invariant manifold\footnote{A differentiable manifold $\cal{M}$ is said to be invariant under the flow of a vector 
field $X$ if for $x \in \cal{M}$, $F_t(x) \in \cal{M}$ for small $t > 0$, where $F_t(x)$ is the flow of $X$.} for \eqref{eq:full_order_system}, $h$ is continuously 
differentiable, and
\begin{align}\label{eq: center manifold properties}
h(0)=0,\;\; D_x h(0)=0.
\end{align}
\end{theorem}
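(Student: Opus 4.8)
The plan is to follow the classical Lyapunov--Perron approach, recasting the existence of the invariant graph as a Banach fixed-point problem. Since the manifold is only local, the first step is a \emph{truncation}: I would multiply $N_1,N_2$ by a smooth cutoff $\psi(x/\varepsilon)$ that equals $1$ on $\|x\|\le\varepsilon$ and vanishes for $\|x\|\ge 2\varepsilon$, obtaining modified nonlinearities $\tilde N_1,\tilde N_2$ that coincide with the originals near the origin. Exploiting the conditions \eqref{conditions_N1_N2}, which force $N_i$ and the relevant blocks of $D N_i$ to vanish at the origin, the global Lipschitz constants of $\tilde N_i$ on $\R^d\times\R^m$ can be made arbitrarily small by shrinking $\varepsilon$; this smallness is what will ultimately drive the contraction. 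Alongside this I would record the \emph{exponential dichotomy} of the spectral splitting: since $\sigma_{\R}(L_2)\subset\R_{<0}$ there are $K,\alpha>0$ with $\|e^{L_2 t}\|\le K e^{-\alpha t}$ for $t\ge 0$, while $\sigma_{\R}(L_1)=\{0\}$ yields only subexponential growth of $e^{L_1 t}$ in both time directions. The gap between the stable rate $-\alpha$ and the central rate $0$ is the structural ingredient that makes the forthcoming improper integral converge.

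The core step is to set up the fixed-point operator on a complete metric space. Let $X$ be the set of bounded, globally Lipschitz maps $h:\R^d\to\R^m$ with $h(0)=0$ and Lipschitz constant at most a small $\delta$; this is a closed subset of the bounded continuous functions equipped with the supremum norm, hence complete. For $h\in X$ and $\xi\in\R^d$ I would solve the reduced equation $\dot u=L_1 u+\tilde N_1(u,h(u))$ with $u(0)=\xi$ to obtain a flow $u(\cdot\,;\xi)$, and define
\begin{equation*}
(\mathcal{T}h)(\xi):=\int_{-\infty}^{0}e^{-L_2 s}\,\tilde N_2\bigl(u(s;\xi),h(u(s;\xi))\bigr)\,ds,
\end{equation*}
which is the unique bounded solution of the $y$-equation evaluated along the reduced flow. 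Because $\tilde N_2$ is bounded, the dichotomy estimate $\|e^{-L_2 s}\|\le K e^{\alpha s}$ for $s\le 0$ makes the integral converge; combining the dichotomy with the small Lipschitz constants of $\tilde N_i$ and a Gronwall control on $\xi\mapsto u(\cdot\,;\xi)$, I would verify that $\mathcal{T}$ maps $X$ into itself and is a contraction once $\varepsilon,\delta$ are small enough.

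The Banach fixed-point theorem then produces a unique $h\in X$, whose graph $y=h(x)$ is invariant for the truncated system; since the truncation agrees with \eqref{eq:full_order_system} on $B:=\{\|x\|<\varepsilon\}$, this restriction is a local center manifold. The tangency conditions \eqref{eq: center manifold properties} follow from \eqref{conditions_N1_N2}: at $\xi=0$ the reduced flow stays at the origin, so the integrand vanishes and $h(0)=0$, and linearizing the integral formula at $\xi=0$ the derivative of the integrand vanishes because $D_x\tilde N_2(0,0)=D_y\tilde N_2(0,0)=0$, giving $D_x h(0)=0$.

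The main obstacle I anticipate is upgrading the regularity of the fixed point: the supremum-norm contraction only delivers a Lipschitz $h$, whereas the theorem asserts $h\in C^1$. I would handle this by the fiber-contraction theorem, extending $\mathcal{T}$ to act on pairs $(h,H)$ where $H$ is a candidate for $D_x h$, showing that the formally differentiated operator contracts in $H$ over the fixed $h$, and concluding that the Lipschitz fixed point is in fact continuously differentiable with $D_x h=H$. Justifying the uniform estimates needed to differentiate under the improper integral, and controlling the dependence of the reduced flow $u(\cdot\,;\xi)$ on $\xi$, is the most delicate bookkeeping in the argument.
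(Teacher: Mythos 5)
This theorem is imported by the paper verbatim from Khalil (Theorem 8.1, which in turn rests on Carr's proof) and is not proved in the paper itself; your sketch is precisely that classical argument — cutoff to obtain small global Lipschitz constants, the Lyapunov--Perron integral operator $(\mathcal{T}h)(\xi)=\int_{-\infty}^{0}e^{-L_2 s}\tilde N_2(u(s;\xi),h(u(s;\xi)))\,ds$ contracting on bounded Lipschitz graphs, and a fiber-contraction step to upgrade to $C^1$ and read off $h(0)=0$, $D_xh(0)=0$ from \eqref{conditions_N1_N2}. The outline is correct and takes essentially the same route as the cited source.
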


Under the assumptions of this theorem, using \eqref{eq:full_order_system} we can deduce that $h $ satisfies the PDE
\begin{equation}\label{cm_pde} 
L_2 h(x) + N_2(x,h(x))=D_x h(x)\left(L_1 x +N_1(x,h(x))\right),
\end{equation}
and the following center manifold theorem ensures that there are continuously differentiable solutions to this PDE. Moreover, the theorem also allows to 
deduce the stability of 
the origin 
of 
the full 
order 
system \eqref{eq:full_order_system} from the stability of the origin of a reduced 
order system called the \emph{center dynamics}.

\begin{theorem}[Center Manifold Theorem (Theorem 8.2 and Corollary 8.2 in \cite{khalil})]\label{cm_theorem} 
Under the assumptions of Theorem \ref{th:carr}, the equilibrium $x=0, y=0$ of the full order system \eqref{eq:full_order_system} is asymptotically stable 
(resp. 
unstable) if and only if the equilibrium 
$x=0$ of the reduced dynamic 
(dynamics on the center manifold) 
\begin{equation}\label{reduced_system} 
\dot{x}= L_1 x +N_1(x,h(x)),
\end{equation}
is  asymptotically stable (resp. unstable).
\end{theorem}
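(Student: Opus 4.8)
The plan is to reduce the statement to the classical \emph{reduction principle} by first flattening the center manifold to a coordinate subspace, and then to treat the two implications separately, with the direction ``stability of the reduced dynamics implies stability of the full system'' being the substantial one. Concretely, I would introduce the transverse coordinate $z := y - h(x)$, so that the center manifold $y = h(x)$ becomes the set $\{z = 0\}$. Differentiating and using the invariance PDE \eqref{cm_pde} satisfied by $h$, the full system \eqref{eq:full_order_system} takes the form
\begin{align*}
\dot{x} &= L_1 x + N_1(x, z + h(x)), \\
\dot{z} &= L_2 z + \tilde{N}(x, z),
\end{align*}
where $\tilde{N}(x,z) := N_2(x, z+h(x)) - N_2(x, h(x)) - D_x h(x)\left(N_1(x, z+h(x)) - N_1(x, h(x))\right)$. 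The crucial structural fact, obtained precisely by subtracting \eqref{cm_pde}, is that $\tilde{N}(x,0) = 0$ for all $x$ near the origin, so that $\{z=0\}$ is invariant and on it the $x$-dynamics reduces exactly to the center dynamics \eqref{reduced_system}.

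The easy implication is that asymptotic stability of the full system forces asymptotic stability of the reduced system: since $\{z=0\}$ is invariant, any trajectory launched from an initial condition on the manifold remains on it and evolves according to \eqref{reduced_system}, so the behavior of \eqref{reduced_system} is inherited as the restriction of the full flow to this invariant set. The corresponding instability statement will be obtained by contraposition together with the dichotomy that, under these hypotheses, the origin is either asymptotically stable or unstable.

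For the hard implication I would assume the origin of \eqref{reduced_system} is asymptotically stable and build a composite Lyapunov function for the transformed system. First, a converse Lyapunov theorem applied to the reduced dynamics (whose right-hand side is continuously differentiable) yields a function $V(x)$, defined near the origin, with $\nabla_x V(x)^T (L_1 x + N_1(x,h(x)))$ negative definite. Second, because $\sigma_{\R}(L_2)\subset\R_{<0}$, the Lyapunov equation $P L_2 + L_2^T P = -I$ has a symmetric positive definite solution $P$. I would then set $W(x,z) := V(x) + (z^T P z)^{1/2}$ and compute $\dot{W}$ along the transformed flow. The $z$-part contributes $z^T P \dot z / (z^T P z)^{1/2}$, whose leading term $z^T P L_2 z = -\tfrac12 \|z\|^2$ gives a genuinely negative contribution of order $\|z\|$, while the $\tilde{N}$-term is controlled using the estimate $\|\tilde{N}(x,z)\| \le k\|z\|$ with $k$ small on a small neighborhood, which is exactly where $\tilde{N}(x,0)=0$ together with the smoothness of $N_1,N_2,h$ enters. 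The $V$-part splits into the negative-definite reduced contribution plus a cross term bounded by $\|\nabla_x V\|\,\|z\|$. Shrinking the neighborhood to dominate the cross terms then yields $\dot{W} < 0$, establishing asymptotic stability of the origin of the full system; the instability case is handled by an analogous Chetaev-type construction.

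The main obstacle I anticipate is the careful bookkeeping of neighborhoods and constants in the Lyapunov estimate: one must ensure that the single neighborhood on which $V$ exists, on which $\|\tilde{N}(x,z)\|\le k\|z\|$ holds with $k$ small, and on which the cross terms are dominated, can all be chosen consistently. A secondary technical point is the nonsmoothness of $(z^T P z)^{1/2}$ at $z=0$, which must be treated either by restricting the computation to $z\neq 0$ and handling $z=0$ separately, or by replacing the square-root term with a smooth surrogate. Finally, one must verify that the change of variables $z = y - h(x)$ is a valid locally invertible, stability-preserving transformation, so that the conclusions transfer back to the original coordinates $(x,y)$.
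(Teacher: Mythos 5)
First, a point of reference: the paper does not prove Theorem \ref{cm_theorem} at all --- it is quoted from \cite{khalil} (Theorem 8.2 and Corollary 8.2) --- so the comparison here is really against the textbook proof, which the paper later adapts almost line by line in its proof of Theorem \ref{th:data_based_cm1}. Measured against that, your construction of the hard direction is exactly right: the transverse coordinate $z=y-h(x)$, the identity $\tilde N(x,0)=0$ obtained by subtracting the invariance PDE \eqref{cm_pde}, the estimate $\|\tilde N(x,z)\|\le k\|z\|$ with $k$ made small by shrinking the neighborhood, the composite function $V(x)+\sqrt{z^TPz}$ with $P$ solving the Lyapunov equation for $L_2$, and the absorption of the cross term $\|\nabla_x V\|\,\|z\|$ into the $-\|z\|/(2\sqrt{\lambda_{\max}})$ contribution are precisely the ingredients of Khalil's argument and of the paper's own Theorem \ref{th:data_based_cm1} (which runs the identical computation with $\hat h$ in place of $h$ plus an extra error term). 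The non-differentiability of $\sqrt{z^TPz}$ at $z=0$ is handled as you suggest, using that $\{z=0\}$ is invariant so the derivative along trajectories exists there.

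The one genuine gap is the dichotomy you invoke for the instability half of the equivalence. Under the hypotheses of Theorem \ref{th:carr} it is \emph{not} true that the origin of the reduced system is either asymptotically stable or unstable: the system $\dot x=0$, $\dot y=-y$ satisfies all the assumptions, has center manifold $h\equiv 0$, and its reduced dynamics $\dot x=0$ is stable but neither asymptotically stable nor unstable. Hence your contraposition for ``full unstable $\Rightarrow$ reduced unstable'' does not close: from ``reduced not unstable'' you may only conclude ``reduced stable'', and you then need the plain-stability version of the reduction principle (reduced stable $\Rightarrow$ full stable). That statement is true, but it does not follow from your Lyapunov construction as written, since converse Lyapunov theorems for mere (non-asymptotic) stability do not furnish a $C^1$ function with bounded gradient and nonpositive derivative; the standard proof (see \cite{Carr81}) instead uses the exponential attractivity of the center manifold, i.e.\ that every solution starting near the origin is shadowed exponentially fast by a solution on the manifold. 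Separately, and in your favor, the direction ``reduced unstable $\Rightarrow$ full unstable'' needs no Chetaev construction at all: trajectories of the reduced system are trajectories of the full system restricted to the invariant set $\{z=0\}$, and $(x_0,h(x_0))\to(0,0)$ as $x_0\to 0$, so escape on the manifold is already escape for the full system.
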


This result guarantees that, upon solving the PDE \eqref{cm_pde} for the unknown $h:\R^d\to\R^m$, the problem of analyzing the stability properties of 
the system 
\eqref{eq:full_order_system}  reduces to analyzing the nonlinear stability of the lower dimensional system \eqref{reduced_system}. This second problem is of 
smaller 
dimension and thus, provided the knowledge of $h$, the approach is attractive to obtain information on the system \eqref{eqn:fors} via a reduced order model. 

Observe that the dimension $d$ of this reduced system is equal to the number of eigenvalues with zero real part (counted with multiplicity) of the original 
system 
\eqref{eqn:fors}, and thus it may take any value between $1$ and the full dimension $n$, depending on the particular dynamical system under 
consideration. The actual dimensional reduction provided by the center manifold is thus highly varying depending on the problem.

We remark that this methodology is valid also for parameterized dynamical systems and is used to study the stability of 
dynamical systems with bifurcations. 

\begin{remark}
An exact knowledge of the center manifold $h$ is not required for this equilibrium analysis, since it is sufficient to have an approximated
solution of the PDE 
\eqref{cm_pde}.
Indeed, it is frequently sufficient to compute only the low degree terms of the Taylor series expansion of $h$ around $x=0$, i.e.,  if $(\cdot)^{[k]}$ is 
the degree $k$ part of the Taylor series of $h$, the approximation
\begin{align}\label{eq:taylor}
h(x)\approx h^{[1]} x+h^{[2]}(x)+h^{[3]}(x)+\ldots+h^{[d-1]}(x)
\end{align}
is sufficient to obtain an approximation of the dynamics of order $\varepsilon^d$ as $\|x\|\leq \varepsilon$. 

The approximation \eqref{eq:taylor} can be 
obtained by 
coefficient comparison, thus rewriting the PDE \eqref{cm_pde} as a set of algebraic equations as 
\begin{align*}
&L_2 h^{[1]}=h^{[1]} L_1\\
&L_2 h^{[2]}(x) + N^{[2]}_2(x,h^{[1]}(x))=D_x h^{[2]}(x)\left(L_1 x_1 +N^{[2]}_1(x,h^{[1]}(x))\right)\\
&L_2 h^{[3]}(x) + \left(N_2(x,h^{[2]}(x))\right)^{[2]}=D_x h^{[2]}(x) \left(L_1 x +\left(N_1(x,h^{[2]}(x))\right)^{[2]}\right)\\
&\dots.
\end{align*}
Nevertheless, even this approximated knowledge of $h$ can be difficult to obtain in practice for a general ODE. 
\end{remark}

\section{A data-based version of the Center Manifold Theorem}\label{sec:data_based_cm}

In this section we show that one direction of Theorem \ref{cm_theorem} holds even when the center manifold is replaced by a sufficiently accurate 
approximation. Namely, we assume to have access only to an approximant $\hat h:\R^d\to\R^m$ of the center manifold, instead of $h$ itself, and we consider 
the approximated reduced 
dynamics
\begin{align*}
\dot{x}= L_1 x +N_1(x,\hat{h}(x)).
\end{align*}
This system is now both low dimensional and completely explicit, since $\hat h$ is known. Observe in particular that the change of coordinates required to 
obtain $L_1$ and $N_1$ from $L$ and $N$ is based solely on a diagonalization of $L$.

Depending on different conditions that the approximant may satisfy, we show in Theorem \ref{th:data_based_cm1} that the
asymptotic stability of the origin of the full order dynamics follows from the asymptotic stability of the approximated center 
dynamics. To formulate this data-based version of the Center Manifold Theorem, we first need to recall the following result. 
\begin{theorem}[Converse Lyapunov theorem (Theorem 4.16 in \cite{khalil})\footnote{The cited theorem actually provides more details on the bounds in 
\eqref{eq:conv_lyap1}, but to 
avoid introducing unnecessary complications we report here just the inequalities that will be used in the following.}]\label{th:conv_lyap}
Assume that $x=0$ is an asymptotically stable equilibrium of the system
\begin{align*}
\dot x = f(x),
\end{align*}
with $f:D\to \R^n$, $D\subset\R^n$ an open set such that $0\in D$, $f\in C^1(D)$, and $D_x f$ bounded in $D$.

Then there exists a neighborhood $B\subset D$ of zero and a Lyapunov function for the system, i.e., a continuously differentiable function $V:B\to \R$ such 
that $V(x)>0$ for all $x\in B\setminus 
\{0\}$, $V(0)=0$, and 
 \begin{align}
  \nabla_x V(x)^T f(x) &\leq -\alpha(\|x\|),\label{eq:conv_lyap1}\\
  \left\|\nabla_x V(x)\right\| &\leq\phantom{-} k\nonumber,  
 \end{align}
where $k>0$ is a constant depending on $B$ and $\alpha: [0, \infty) \rightarrow [0,\infty)$ is a strictly increasing continuous function with $\alpha(0)=0$.
\end{theorem}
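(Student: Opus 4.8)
The plan is to build the Lyapunov function explicitly from the flow, following the classical converse-theorem construction. Write $\phi(t;x)$ for the solution of $\dot x=f(x)$ with $\phi(0;x)=x$. Since the origin is asymptotically stable and $D_x f$ is bounded, a standard comparison argument first produces a bounded neighborhood $B\subset D$ and a class-$\mathcal{KL}$ estimate $\|\phi(t;x)\|\le\beta(\|x\|,t)$ valid for all $x\in B$ and $t\ge 0$; this single inequality encodes both stability (boundedness uniform in $t$) and attractivity ($\beta(r,t)\to 0$ as $t\to\infty$).

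The heart of the argument is to set
\[
V(x):=\int_0^\infty G\bigl(\|\phi(\tau;x)\|\bigr)\,d\tau
\]
for a carefully chosen auxiliary function $G$. I expect this to be the main obstacle: under merely asymptotic (rather than exponential) convergence the naive choice $G(r)=r^2$ fails, since e.g.\ for $\dot x=-x^3$ one has $\|\phi(\tau;x)\|^2\sim 1/(2\tau)$ and the integral diverges. The remedy is Massera's lemma, which furnishes a strictly increasing $G$ with $G(0)=0$, smooth enough to transfer regularity to $V$, whose composition with the $\mathcal{KL}$ bound, $G(\beta(\|x\|,\tau))$, is integrable in $\tau$. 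Granting such a $G$, the integrand is dominated by an integrable function, so $V$ is well defined, and $V(x)>0$ for $x\neq 0$ together with $V(0)=0$ follow from the positivity of $G$ away from the origin.

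Once $V$ is in hand, the decay estimate is immediate from the semigroup property $\phi(\tau;\phi(t;x))=\phi(t+\tau;x)$. Substituting and reindexing gives $V(\phi(t;x))=\int_t^\infty G(\|\phi(s;x)\|)\,ds$, whence
\[
\nabla_x V(x)^T f(x)=\frac{d}{dt}\Big|_{t=0}V(\phi(t;x))=-G(\|x\|)=:-\alpha(\|x\|),
\]
and $\alpha:=G$ is strictly increasing and continuous with $\alpha(0)=0$, exactly the first required inequality.

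It remains to produce the gradient bound, which I would obtain by differentiating $V$ under the integral sign. Here the boundedness of $D_x f$ is essential: via Gronwall's inequality it controls the sensitivity $\partial_x\phi(\tau;x)$ uniformly in $x\in B$, and combined with the decay of $\|\phi(\tau;x)\|$ and the growth of $G$ supplied by Massera's lemma it yields a convergent, uniform bound on $\nabla_x V$ over the bounded set $B$. This both establishes $V\in C^1(B)$ and gives the constant $k$ depending on $B$ such that $\|\nabla_x V(x)\|\le k$. Verifying that these differentiations under the integral are legitimate is the second, more routine, technical point once the Massera function $G$ has been fixed with enough smoothness.
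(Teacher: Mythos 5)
This statement is not proved in the paper at all: it is imported verbatim (in weakened form) from Theorem 4.16 of Khalil's \emph{Nonlinear Systems}, so there is no in-paper proof to compare against. Your sketch is the standard Massera-type converse construction --- class-$\mathcal{KL}$ estimate, Massera's lemma to choose $G$ so that both $\int_0^\infty G(\beta(r,\tau))\,d\tau$ and the sensitivity-weighted integral $\int_0^\infty G'(\beta(r,\tau))e^{L\tau}\,d\tau$ converge, then $V(x)=\int_0^\infty G(\|\phi(\tau;x)\|)\,d\tau$ --- which is essentially the argument in the cited source, and it is correct in outline. The only points you gloss over are routine: the non-differentiability of $\|\cdot\|$ at the origin (harmless, since by uniqueness $\phi(\tau;x)=0$ forces $x=0$, and $G$ can be chosen with $G'(0)=0$), and the choice of $B$ small enough that trajectories remain in $D$ for all $t\ge 0$.
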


We can now state and prove our main result. %We denote as $B_{\rho}:=B_\rho(0, 0)\subset \R^n$ the ball of center $(0, 0)\in\R^n$ and radius $\rho>0$.
Actually, Theorem \ref{th:data_based_cm1}. can be understood in a more general way as a ``robust'' center manifold theorem as $\hat h$ does not necessarily need to be a data-based approximant. But as we only use such data-based approximants in the following, this terminology data-based Center Manifold Theorem seems appropriate.

\begin{theorem}[Data based Center Manifold Theorem - Part I]\label{th:data_based_cm1} 
Under the assumptions of Theorem \ref{th:carr}, assume that $D$ is bounded, let $B\subset D\subset\R^d$ be a neighborhood of zero, and let $\hat h:B\to\R^m$ be an approximant of the 
center 
manifold $h$. 
% Given $\rho>0$, define furthermore $L_{N_1, \rho}:=\sup\limits_{(x, y)\in B_{\rho}\cap D}\left\|\partial_y N_1(x, y)\right\|<\infty$.
Define furthermore $\ell_D:=\sup_{(x, y)\in D}\left\|D_y N_1(x, y)\right\|<\infty$.

Assume that the point $x=0$ is an asymptotically stable equilibrium of the approximated center dynamics 
\begin{equation}\label{eq:red_appr_system} 
\dot{x}= L_1 x +N_1(x,\hat{h}(x)).
\end{equation}
Let $V$ be the function obtained by applying Theorem \ref{th:conv_lyap} to \eqref{eq:red_appr_system}, let $\alpha(\cdot)$ and $k$ be as in 
\eqref{eq:conv_lyap1}, and assume that there exists $p>0$ such that 
\begin{align}\label{eq:alpha_asympt}
0<\lim\limits_{\|x\|\to 0} \frac{\alpha(\|x\|)}{\phantom{\alpha(}\|x\|^p}<\infty. 
\end{align}
Under these assumptions, define
\begin{align}\label{eq:epsilon_zero}
\varepsilon_0:= \frac{1}{k \ell_D}\cdot \inf\limits_{x\in B\setminus\{0\}}  \frac{\alpha(\|x\|)}{\phantom{\alpha(}\|x\|^p}>0.
\end{align}
Then, if there exist $q\geq p$ and $\varepsilon\leq\varepsilon_0$ such that 
\begin{align}\label{eq:error_hypothesis1}
\left\|h(x) - \hat h(x)\right\| \leq \varepsilon \|x\|^q \quad \fa x \in B,
\end{align}  
the equilibrium $x=0, y=0$ of the full order dynamics \eqref{eq:full_order_system} is asymptotically stable.
\end{theorem}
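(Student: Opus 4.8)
The plan is to deduce the claim from the classical Center Manifold Theorem (Theorem~\ref{cm_theorem}): since that result gives asymptotic stability of the full order system \eqref{eq:full_order_system} if and only if the \emph{exact} reduced dynamics $\dot x = L_1 x + N_1(x,h(x))$ is asymptotically stable, it suffices to establish the latter. The key device is to recycle the Lyapunov function $V$ produced by the converse Lyapunov theorem (Theorem~\ref{th:conv_lyap}) for the \emph{approximated} system \eqref{eq:red_appr_system}, and to show that this same $V$ is still a strict Lyapunov function for the exact reduced dynamics, provided $\hat h$ is close enough to $h$.

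First I would differentiate $V$ along the exact reduced dynamics and add and subtract the approximated vector field, writing
\begin{align*}
\nabla_x V(x)^T\left(L_1 x + N_1(x,h(x))\right)
&= \nabla_x V(x)^T\left(L_1 x + N_1(x,\hat h(x))\right)\\
&\quad + \nabla_x V(x)^T\left(N_1(x,h(x)) - N_1(x,\hat h(x))\right).
\end{align*}
The first summand is exactly the expression bounded in \eqref{eq:conv_lyap1} by $-\alpha(\|x\|)$. For the second I would use Cauchy--Schwarz with $\|\nabla_x V(x)\|\leq k$, then estimate the increment of $N_1$ in its second slot by the mean value inequality, giving $\|N_1(x,h(x))-N_1(x,\hat h(x))\|\leq \ell_D\,\|h(x)-\hat h(x)\|$ because $\ell_D$ bounds $\|D_y N_1\|$ on $D$. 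Substituting the error hypothesis \eqref{eq:error_hypothesis1} yields
\begin{align*}
\nabla_x V(x)^T\left(L_1 x + N_1(x,h(x))\right)\leq -\alpha(\|x\|) + k\,\ell_D\,\varepsilon\,\|x\|^q.
\end{align*}

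The final step is to check that the right-hand side is negative for $x\neq 0$ near the origin. Factoring out $\|x\|^p$ rewrites the bound as $-\|x\|^p\left(\alpha(\|x\|)/\|x\|^p - k\,\ell_D\,\varepsilon\,\|x\|^{q-p}\right)$, and here the definition \eqref{eq:epsilon_zero} of $\varepsilon_0$ is tailored precisely so that $k\,\ell_D\,\varepsilon_0 = \inf_{x\in B\setminus\{0\}}\alpha(\|x\|)/\|x\|^p$. Using $q\geq p$ (so $\|x\|^{q-p}\leq 1$ for small $\|x\|$) together with $\varepsilon\leq\varepsilon_0$ makes the bracketed term nonnegative, and it is strictly positive whenever $q>p$ or $\varepsilon<\varepsilon_0$; the asymptotic condition \eqref{eq:alpha_asympt} both guarantees $\varepsilon_0>0$ and controls the behaviour of $\alpha(\|x\|)/\|x\|^p$ near the origin, which is where strictness is needed. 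Hence $\dot V<0$ on a punctured neighborhood of $0$, so $V$ is a strict Lyapunov function, the exact reduced dynamics is asymptotically stable, and Theorem~\ref{cm_theorem} transfers this to the full system.

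I expect the main obstacle to be precisely this last strictness argument in the borderline regime $q=p$ and $\varepsilon=\varepsilon_0$, where the crude estimate only delivers $\dot V\leq 0$; squeezing out genuine asymptotic (rather than mere Lyapunov) stability there is exactly what forces the quantitative asymptotic assumption \eqref{eq:alpha_asympt} on $\alpha$, and possibly a passage to a smaller neighborhood on which the infimum defining $\varepsilon_0$ strictly increases. A secondary technical point is the admissibility of the mean value estimate: one must ensure the segment joining $h(x)$ and $\hat h(x)$ stays inside $D$ so that the bound by $\ell_D$ applies, which holds for $x$ sufficiently close to $0$ since both maps nearly vanish there and $D$ contains a neighborhood of the origin.
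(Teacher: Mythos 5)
Your proposal is correct in its essential strategy, but it takes a genuinely different route from the paper's. You establish asymptotic stability of the \emph{exact} reduced dynamics $\dot x = L_1x+N_1(x,h(x))$ by showing that the Lyapunov function $V$ of the approximated system still decreases along it, and then invoke the classical Center Manifold Theorem (Theorem~\ref{cm_theorem}) to transfer stability to the full order system. The paper never passes through the exact reduced dynamics: it adapts the proof of Khalil's Theorem~8.2 directly, changes variables to $(x,w)=(x,y-h(x))$, and builds a composite Lyapunov function $\mathcal{V}(x,w)=V(x)+\sqrt{w^TPw}$ for the full system, handling the transverse dynamics explicitly via the Lyapunov equation $L_2^TP+PL_2=-I$ and the bounds $\|\mathcal{N}_i(x,w)\|\le k_i\|w\|$. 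The central estimate is identical in both arguments: add and subtract $N_1(x,\hat h(x))$, apply Cauchy--Schwarz with $\|\nabla_x V\|\le k$, bound the increment of $N_1$ by $\ell_D\|h(x)-\hat h(x)\|$, and compare $k\,\ell_D\,\varepsilon\,\|x\|^q$ with $\alpha(\|x\|)$ through the definition \eqref{eq:epsilon_zero}. Your route is shorter and delegates all of the $w$-dynamics to the classical theorem as a black box; the paper's route produces an explicit Lyapunov function for the full system, which is what its subsequent remarks (replacing $\ell_D$ by the smaller $\ell_\rho$, sharpness in $q$) lean on. Two further points in your favour: the borderline case $q=p$, $\varepsilon=\varepsilon_0$ that you flag as the main obstacle is real --- there the estimate only yields $\dot V\le 0$ at points where the infimum in \eqref{eq:epsilon_zero} is attained --- but the paper's own proof asserts strict negativity at exactly the same spot without further justification, so your argument is no less rigorous than the published one; and your caveat about the segment joining $h(x)$ and $\hat h(x)$ remaining inside $D$ is precisely the role played in the paper by the restriction to the ball $B_\rho$ and the constant $\ell_\rho\le\ell_D$.
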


\begin{proof}
First we prove that if $x=0$ is an equilibrium of \eqref{eq:red_appr_system} then $(x, y) = (0, 0)$ is an equilibrium of the full order system. Indeed, since 
$h(0)=0$ by definition, we have that \eqref{eq:error_hypothesis1} implies that $\hat h(0) = 0$ whatever is the value of $q>0$. Thus, since $x=0$ is an 
equilibrium of \eqref{eq:red_appr_system}, we have
\begin{align*}
0=L_1 0 +N_1(0,\hat{h}(0)) = L_1 0 +N_1(0,0)= L_1 0 +N_1(0,{h}(0)),
\end{align*}
and thus $x=0$ is an equilibrium of the reduced system \eqref{reduced_system}. By Theorem \ref{cm_theorem} it follows that $(0, 0)$ is an equilibrium of 
\eqref{eq:full_order_system}.

In order to prove asymptotic stability we adapt the proof of Theorem 8.2 in \cite{khalil}. We consider 
the change of variables
\begin{align*}
\left[\begin{array}{c} x \\ w \end{array} \right]= \left[\begin{array}{c} x \\ y-h(x) \end{array} \right],
\end{align*}
which transforms \eqref{eq:full_order_system} into
\begin{align}\label{eq:full_new_vars} 
 \dot{x} &= L_1 x + N_1(x,w+h(x))\nonumber\\
 \dot{w} &= L_2(w+h(x))+N_2(x,w+h(x))-D_x h(x) \left(L_1 x+N_1(x,w+h(x))\right),
\end{align} 
and this transformation is such that this new system has the same stability properties of \eqref{eq:full_order_system} (see \cite{khalil}).

Observe that the difference $y(t)-h(x(t))$ represents the deviation of the trajectory from the center manifold  at any time $t$. This may be identically 
zero if $y(0)=h(x(0))$ since in this case the solution $(x(t),y(t))$ will lie in the manifold for all $t \ge 0$, and if so the second equation in 
\eqref{eq:full_new_vars} is trivially true.

Defining now
\begin{align*}
{\cal N}_1(x,w)&:=N_1(x,w+h(x))-N_1(x,{h}(x)),\\
{\cal N}_2(x,w)&:=N_2(x,w+h(x)) -N_2(x, h(x)) -D_x h(x)\ {\cal N}_1(x,w),
\end{align*}
it is proven in \cite{khalil} that by manipulation of the previous equations it follows that 
\begin{align}
 \dot{x}&= L_1 x + N_1(x,{h}(x))+ {\cal N}_1(x,w),\\
 \dot{w} &= L_2 w + {\cal N}_2(x,w)\nonumber,
\end{align} 
and furthermore that for any radius $\rho>0$ there exist constants $k_i:=k_i(\rho)>0$ such that for all $(x, w)$ in the ball $B_\rho:=B_\rho(0, 0)$ with 
center $(0, 
0)$ and radius $\rho$ it holds
\begin{align}\label{eq:bound_on_N_i}
\left\|{\cal N}_i(x,w)\right\| \leq k_i \left\|w\right\|,\quad i=1,2, 
\end{align}
where the constants $k_1$ and $k_2$ can be made arbitrarily small by choosing $\rho$ small enough.

We now consider the reduced order system \eqref{eq:red_appr_system}
and show that if the origin $x=0$ is asymptotically stable, then the origin $(x,w)=(0,0)$ is asymptotically stable for the full system in the form 
\eqref{eq:full_new_vars}.

Since $x=0$ is asymptotically stable, by Theorem \ref{th:conv_lyap} there are a continuously differentiable and 
positive function $V$ and a constant $k>0$ such that in a neighborhood $B$ of the origin $x=0$ it holds
\begin{align}\label{eq:bound_on_de_v}
\nabla_x V(x)^T \left(L_1 x +N_1(x,\hat{h}(x))\right)& \le -\alpha(\|x\|),\\
\nonumber\left\| \nabla_x V(x)\right\|\ &\le\phantom{-}  k,
\end{align}
where $\alpha: [0, \infty) \rightarrow [0,\infty)$ is a strictly increasing continuous function with $\alpha(0)=0$. Furthermore, by choosing $\rho$ small 
enough we may assume that $x\in B$ for all $(x,w)\in B_\rho$, and thus both the bounds \eqref{eq:bound_on_N_i} and \eqref{eq:bound_on_de_v} hold in $B_{\rho}$.

Moreover, in the proof of Theorem 8.2 in \cite{khalil} it is shown that there is a matrix $P$ which is positive definite and solves the Lyapunov equation 
$L_2^TP+PL_2=-I$, where $I$ is the identity matrix. We denote as $\lambda_{\min}, \lambda_{\max} >0$ the minimal and maximal eigenvalues of this matrix. 

Using $V$ and $P$, we define a candidate Lyapunov function
\begin{equation}
{\cal V}(x,w):=V(x)+\sqrt{w^T P w}  
\end{equation}
for the full order system. 
By construction this function is zero in $(x,w)=(0,0)$ and strictly positive in $B_\rho\setminus\{(0,0)\}$. If we show that its derivative along 
the trajectories of the full order system is strictly negative, it follows (see Theorem 4.1 in \cite{khalil}) that 
the origin $(x, w) = (0, 0)$ is asymptotically stable for the full system.

To see this, we compute the derivative of ${\cal V}$ along the trajectories of the full order system as
\begin{align}\label{eq:lyapunov}
\dot{{\cal V}}(x,w)
= \nabla_x &V(x)^T \left(L_1 x + N_1(x,h(x))+{\cal N}_1(x,w)\right)+\nonumber\\
&+\frac{1}{2 \sqrt{w^TPw}}\left(w^T\left(PL_2+L_2^TP\right)w+2w^TP {\cal N}_2(x,w)\right).
\end{align}
We take from the proof of Theorem 8.2 in \cite{khalil} the estimate of the second term, i.e., 
\begin{align}\label{eq:first_term_bound}
\frac{1}{2 \sqrt{w^TPw}}\left(w^T\left(PL_2+L_2^TP\right)w+2w^TP {\cal N}_2(x,w)\right) 
&\leq -\left(\frac{1}{2 \sqrt{\lambda_{\max}}}-\frac{k_2 \lambda_{\max}} {\sqrt{\lambda_{\min}}}\right) \|w\|.
\end{align}
To bound instead the first term of \eqref{eq:lyapunov}, we add and subtract $N_1(x,\hat h(x))$, we use the two inequalities \eqref{eq:bound_on_de_v} and the 
bound \eqref{eq:bound_on_N_i} 
with $i=1$, and via the Cauchy-Schwartz inequality we obtain
\begin{align}\label{eq:second_term_bound}
\partial_x &V(x)^T\left(L_1 x + N_1(x,h(x))+{\cal N}_1(x,w)\right)=\nonumber\\
&=\nabla_x V(x)^T\left(L_1 x + N_1(x,\hat h(x)) - N_1(x,\hat h(x)) + N_1(x,h(x))+{\cal N}_1(x,w)\right)\nonumber\\
&=\nabla_x V(x)^T\left(L_1 x + N_1(x,\hat h(x))\right) + \nabla_x V(x)^T\left(N_1(x,h(x)) - N_1(x,\hat h(x))+ {\cal N}_1(x,w)\right)\nonumber\\
&\leq -\alpha\left(\|x\|\right) + \left\|\nabla_x V(x)\right\| \left\|N_1(x,h(x)) - N_1(x,\hat h(x))+ {\cal N}_1(x,w)\right\|\nonumber\\
&\leq -\alpha\left(\|x\|\right) + k \left\|N_1(x,h(x)) - N_1(x,\hat h(x))\right\| + k \left\|{\cal N}_1(x,w)\right\|\nonumber\\
&\leq -\alpha\left(\|x\|\right) + k \left\|N_1(x,h(x)) - N_1(x,\hat h(x))\right\|+ k k_1 \|w\|.
\end{align}
Moreover, since $N_1$ is continuously differentiable by assumption, for all $x$ the function $w\mapsto N_1(x, w)$ is Lipschitz 
continuous for $(x, w)\in  B_\rho$, with a Lipschitz constant that is bounded by $\ell_{\rho}:=\sup_{(x, y)\in B_\rho}\left\|D_y N_1(x, 
y)\right\|<\infty$. In particular since $B_\rho\subset D$ it holds  $\ell_{\rho}\leq \ell_D$ and 
\begin{align}\label{eq:second_term_bound_2}
\left\|N_1(x, h(x)) - N_1(x, \hat h(x))\right\|
&\leq \ell_{\rho}\left\|h(x) - \hat h(x)\right\|
\leq \ell_D\left\|h(x) - \hat h(x)\right\|
\quad \fa x\in B_\rho.
\end{align}
Inserting \eqref{eq:first_term_bound}, \eqref{eq:second_term_bound} and \eqref{eq:second_term_bound_2} into \eqref{eq:lyapunov} gives for all $(x, w) \in 
B_\rho$ the bound
\begin{align*}
\dot{{\cal V}}(x,w)
&\leq -\alpha\left(\|x\|\right) +  k  \ell_D\left\|h(x) - \hat h(x)\right\|
-\left(\frac{1}{2 \sqrt{\lambda_{\max}}}-\frac{k_2 \lambda_{\max}} {\sqrt{\lambda_{\min}}}-k k_1\right) \|w\|\\
&= -\alpha\left(\|x\|\right) +  k  \ell_D\left\|h(x) - \hat h(x)\right\| - \frac{1}{4 \sqrt{\lambda_{\max}}}\|w\|
-\left(\frac{1}{4 \sqrt{\lambda_{\max}}}-\frac{k_2 \lambda_{\max}} {\sqrt{\lambda_{\min}}}-k k_1\right) \|w\|.
\end{align*}
The term $\left(\frac{1}{4 \sqrt{\lambda_{\max}}}-\frac{k_2 \lambda_{\max}} {\sqrt{\lambda_{\min}}}-k k_1\right)$ is always strictly positive if 
$k_1$, $k_2$ are made sufficiently small by choosing an appropriately small value of $\rho$. Thus, we are left with the 
inequality
\begin{align*}
\dot{{\cal V}}(x,w)
&\leq -\alpha\left(\|x\|\right) +  k  \ell_D\left\|h(x) - \hat h(x)\right\| - \frac{1}{4 \sqrt{\lambda_{\max}}}\|w\|,\;\; (x, w) \in B_\rho,
\end{align*}
and using the error bound \eqref{eq:error_hypothesis1} we get that
\begin{align}\label{eq:final_bound}
\dot{{\cal V}}(x,w) \le k \ell_D \varepsilon \|x\|^q -\alpha(\|x\|)-\frac{1}{4 \sqrt{\lambda_{\max}}} \|w\|,\;\; (x, w) \in B_\rho.
\end{align}
If $x=0$ and $w\neq 0$, the last term ensures that $\dot{{\cal V}}(x,w)<0$.
Moreover, the term  $k \ell_D \varepsilon \|x\|^p -\alpha(\|x\|)$ is strictly negative in $B_\rho\setminus\{x=0\}$ exactly when 
\begin{align*}
 \varepsilon  <\frac{1}{k \ell_D}\frac{\alpha(\|x\|)}{\phantom{\alpha(}\|x\|^q}\;\; \fa (x, 0) \in B_{\rho}\setminus\{x=0\},
\end{align*}
i.e., when $\varepsilon\leq \varepsilon_0$, which is true by assumption. 

It follows that $\dot{{\cal V}}(x,w)<0$ for all $(x, w) \in B_\rho\setminus \{(0, 0)\}$, and thus the 
origin $(0, 0)$ is asymptotically stable for the full order system \eqref{eq:full_new_vars}, and we are done.
\end{proof}

We conclude this section with some remarks on the theorem and on its proof.

\begin{remark}[Weaker error conditions - Error rate]
Observe that the error statement \eqref{eq:error_hypothesis1} is sharp in $q$, since the assumption that the approximation error behaves as $\alpha$ when 
$\|x\|\to0$, i.e., that \eqref{eq:error_hypothesis1} holds with $q\geq p$, is crucial for the proof. 
Indeed, if only an error of order $\|x\|^q$ with $q<p$ can be obtained in 
\eqref{eq:error_hypothesis1}, then \eqref{eq:epsilon_zero} would give $\varepsilon_0=0$. Thus it would 
not be possible to conclude the proof by showing that $\dot{{\cal V}}(x,w)<0$ in a neighborhood of $(0, 0)$, no matter how small we choose the neighborhood, 
unless $\hat h = h$.
% More precisely, it would not be possible to choose a finite $n\in\N$ such that the approximant $\hat h_n$ gives $\varepsilon(n)\leq \varepsilon_0$.

This is the case also if one is trying to prove stability instead of asymptotic stability of the origin of the full order system. Indeed, in this case it 
would be sufficient to prove that $\dot{{\cal V}}(x,w) \le 0$ in $B_\rho\setminus \{0\}$ (i.e., equality is possible), but also in this case the same problem 
as before would persist. 
\end{remark}

\begin{remark}[Weaker error conditions - Error constant]
The error bound \eqref{eq:error_hypothesis1} is instead not necessarily optimal with respect to the constant $\varepsilon$. Indeed, in \eqref{eq:epsilon_zero} 
we defined for simplicity $\varepsilon_0$ as a function of $\ell_D$, which is the Lipschitz constant of $N_2$ in $D$ with respect to its second variable. 
Inspecting the proof, it is clear that it would instead be sufficient to define $\varepsilon_0$ in terms of $\ell_{\rho}$ (see \eqref{eq:second_term_bound_2}). 
Since $\ell_{\rho}\leq \ell_D$, the new value of $\varepsilon_0$ would be larger, and thus the condition $\varepsilon\leq \varepsilon_0$ could be made less 
restrictive.
\end{remark}

\begin{remark}[Unstable equilibrium]
The Center Manifold Theorem \ref{cm_theorem} guarantees also that if the origin is unstable in the reduced system, then $(0, 0)$ is unstable in the full 
system. We do not see how to extend this result to our case.
\end{remark}

\begin{remark}[Data based Center Manifold Theorem - Part II]
We currently have no proof of the other implication of the Center Manifold Theorem, i.e., proving that if the origin of the full order dynamics is 
asymptotically stable, then also the origin of the approximated reduced dynamics is asymptotically stable.

This converse implication would be of practical interest in order to guarantee the well-posedness of the problem of studying the asymptotic stability of an 
equilibrium of the reduced and approximated system. 
Indeed, this result would ensure that if one starts from a full system with an asymptotically stable equilibrium 
and constructs a sufficiently accurate approximant, then also this approximant has an asymptotically stable equilibrium.
\end{remark}

\begin{remark}[Finding the correct value of $q$]
We will see in the following sections that it is possible to obtain data-based approximants $\hat h$ which satisfy the error bound 
\eqref{eq:error_hypothesis1} with arbitrarily large $q$. Nevertheless, this process requires that $q$ is known before constructing $\hat h$, but 
in order to apply Theorem \ref{th:data_based_cm1} we need $q\geq p$, and the value of $p$ depends on the behavior of the Lyapunov function $V$, 
which in turns depends on $\hat h$ itself. This circular argument may rule out the possibility to design a systematic way to find the 
correct value of $q$.

To solve this issue, it would be interesting to know if the exponent $p$ in \eqref{eq:alpha_asympt} can be deduced from properties of the full system alone, 
independently of $\hat h$. Namely, one may expect that if the exact reduced system has a Lyapunov function $V_h$ which satisfies \eqref{eq:alpha_asympt} with a 
given $p$, then any sufficiently accurate approximant $\hat h$ would define an approximated reduced system \eqref{eq:red_appr_system} with a Lyapunov 
function $V_{\hat h}$ which satisfies \eqref{eq:alpha_asympt} with the same $p$. 

This kind of results may be possibly derived from results on the perturbation of Lyapunov functions (see e.g. \cite{Lin1996}).
\end{remark}

\subsection{Evolution of the system along the approximated reduced dynamics}

An approximated center manifold can also be used to define approximated trajectories of the reduced system. In the following statement we show that 
the approximation error of $\hat h$ can be used to bound the deviation between the true and the reduced center dynamics.

In the following we denote as $\diam(B)$ the diameter of the set $B$.

\begin{theorem}\label{th:perturbation}
Under the assumptions of Theorem \ref{th:carr}, let $B\subset\R^d$ be a neighborhood of zero, and let $\hat h:B\to\R^m$ be an approximant of the 
center manifold $h$ such that there exist $q\geq 0$ and $\varepsilon>0$ with
\begin{align}\label{eq:error_hypothesis2}
\left\|h(x) - \hat h(x)\right\| \leq \varepsilon \|x\|^q \quad \fa x \in B.
\end{align}  
Furthermore, define the Lipschitz constants
\begin{align*}
\ell_1:=\sup_{x\in B}\left\|D_x\left(L_1 x + N_1(x, h(x))\right)\right\|,\quad \ell_2  &:= \sup_{x\in B}\left\|D_y N_1(x, y)_{|y=h(x)}\right\|.
\end{align*}
If $x_0\in B$, and if $x(t), z(t)$ are the solutions of the initial value problems 
\begin{align*}
\left\{\begin{array}{llcll}
\dot{x}&= L_1 x +N_1(x,{h}(x))\\
x(0) &= x_0
\end{array}\right.
,\quad
&
\quad
\left\{\begin{array}{ll}
\dot{z}&= L_1 z +N_1(z,\hat{h}(z))\\
z(0) &= x_0
\end{array}\right.,
\end{align*}
then for all $t>0$ it holds
\begin{align}\label{eq:perturbed_evolution}
\left\|x(t) - z(t)\right\| \leq \varepsilon\frac{\ell_2}{\ell_1}\diam(B)^q \left(e^{\ell_1\cdot t} - 1\right).
\end{align}
\end{theorem}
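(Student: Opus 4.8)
The plan is to estimate the evolution of the difference $e(t) := x(t) - z(t)$ via a Gr\"onwall-type argument. First I would write down the integral form of the two initial value problems: since both trajectories start at $x_0$, we have
\begin{align*}
e(t) = \int_0^t \left[\left(L_1 x(s) + N_1(x(s), h(x(s)))\right) - \left(L_1 z(s) + N_1(z(s), \hat h(z(s)))\right)\right] ds.
\end{align*}
The key idea is to split the integrand into two pieces by adding and subtracting the ``mixed'' term $N_1(z(s), h(z(s)))$. This separates the difference into a part that measures the sensitivity of the exact vector field $g(x) := L_1 x + N_1(x, h(x))$ to a change in its argument (from $x$ to $z$), and a part that measures the effect of replacing $h$ by $\hat h$ while keeping the argument fixed at $z$.

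The first part, $\|g(x(s)) - g(z(s))\|$, is bounded by $\ell_1 \|e(s)\|$ using the Lipschitz constant $\ell_1$ of $g$ on $B$, which follows from the mean value inequality since $\ell_1$ bounds $\|D_x g\|$ on the convex-hull-closure of $B$ (I would assume $B$ convex, or replace $\ell_1$ by the sup over the relevant segment). The second part, $\|N_1(z(s), h(z(s))) - N_1(z(s), \hat h(z(s)))\|$, is bounded by $\ell_2 \|h(z(s)) - \hat h(z(s))\|$ using the Lipschitz constant $\ell_2$ of $N_1$ in its second argument; then invoking the error hypothesis \eqref{eq:error_hypothesis2} gives $\ell_2 \varepsilon \|z(s)\|^q \le \ell_2 \varepsilon \diam(B)^q$, where the last step uses $z(s) \in B$ and $\|z(s)\| \le \diam(B)$ (as $0 \in B$). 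Combining these yields the scalar inequality
\begin{align*}
\|e(t)\| \le \int_0^t \left(\ell_1 \|e(s)\| + \varepsilon \ell_2 \diam(B)^q\right) ds.
\end{align*}
Applying the integral form of Gr\"onwall's inequality to $u(t) := \|e(t)\|$ with constant forcing $c := \varepsilon \ell_2 \diam(B)^q$ and linear coefficient $\ell_1$ then produces $u(t) \le \frac{c}{\ell_1}\left(e^{\ell_1 t} - 1\right)$, which is exactly the claimed bound \eqref{eq:perturbed_evolution}.

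The main obstacle is not any single estimate but rather the \emph{book-keeping of domains}: I must ensure both $x(s)$ and $z(s)$ remain in $B$ for all $s \in [0,t]$, since the Lipschitz bounds $\ell_1, \ell_2$ and the error bound are only assumed valid on $B$. The statement as written asserts the inequality for all $t > 0$ without such a restriction, so strictly I would either add the hypothesis that both trajectories stay in $B$ on $[0,t]$, or argue via a continuation/maximal-interval argument that the bound holds as long as the trajectories have not left $B$. A secondary technical point is the convexity of $B$ needed to apply the mean value inequality for the $\ell_1$ step; if $B$ is not convex one should instead define $\ell_1, \ell_2$ as suprema along the relevant line segments, but I expect the intended reading is that these constants genuinely control the increments, so I would note this and proceed.
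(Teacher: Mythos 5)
Your proposal is correct and follows essentially the same route as the paper: the paper bounds the perturbation $N_1(x,h(x))-N_1(x,\hat h(x))$ uniformly on $B$ by $\ell_2\,\varepsilon\,\diam(B)^q$ and then cites the classical perturbation result (Theorem 3.4 in Khalil) as a black box, which is exactly the Gr\"onwall argument you carry out explicitly. Your remarks on domain book-keeping and convexity are fair, but they apply equally to the paper's proof and do not constitute a divergence in method.
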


\begin{proof}
We define $g:\R^n \to \R^n$ as
\begin{align*}
g(x):= \left(L_1 x +N_1(x,{h}(x))\right) - \left(L_1 x +N_1(x,\hat{h}(x))\right)
=N_1(x,{h}(x)) - N_1(x,\hat{h}(x)),
\end{align*}
and set $\mu_B:= \sup_{x\in B}\left\|g(x)\right\|$. 
Then a classical perturbation result (see e.g. Theorem 3.4 in \cite{khalil}) states that
\begin{align*}
\left\|x(t) - z(t)\right\| \leq \frac{\mu_B}{\ell_1} \left(e^{\ell_1\cdot t} - 1\right)\quad \fa t>0.
\end{align*}
Following the same steps as in the proof of Theorem \ref{th:data_based_cm1} and using the bound \eqref{eq:error_hypothesis2} we derive the bound
\begin{align*}
\mu_B
&\leq \ell_2 \sup\limits_{x\in B}\left\|h(x) - \hat h(x)\right\| 
\leq \ell_2 \varepsilon \sup\limits_{x\in B}\|x\|^q \leq \ell_2 \varepsilon \diam(B)^q,
\end{align*}
and this concludes the proof.
\end{proof}

\section{Kernel-based approximation of the center manifold}\label{sec:kernel}
We discuss now a concrete approach for the construction of a data-based approximation of the center manifold, based on kernel methods.
The algorithm that we are discussing has been introduced in \cite{Haasdonk2020a}. We recall it here for completeness, and in addition we present a proof of the optimality of the approximant and we derive error bounds that may be combined with Theorem \ref{th:data_based_cm1} and Theorem \ref{th:perturbation}.

The approximant $\hat h$ to the center manifold is constructed in a data-based way, i.e., we assume to know the values taken by the true manifold $h$ on a finite 
set of input values. To collect these input-output pairs one can first derive the splitting \eqref{eq:full_order_system} by analyzing the eigenvalues and eigenvectors of the linearization $L:=D_x f(x)_{|{x=0}}\in \R^{n\times n}$, as discussed in Section \ref{sec:center_manifold}. When this splitting is known, it is possible to consider several initial conditions $\left(x_0, y_0\right)$ and the corresponding Initial Value Problems (IVPs) given by the coupling of the initial values with the dynamics \eqref{eq:full_order_system}. These IVPs may be solved numerically with high accuracy to collect sequences of pairs $(x_i, y_i)$ that are numerical trajectories of the IVP at a given time instant, and which are collected into a dataset $X_N:=\{x_i\}_{i=1}^N$, $Y_N:=\{x_i\}_{i=1}^N$ for a certain $N\in\N$. Thanks to Theorem \ref{th:carr}, we know that if the point $(x_i, y_i)$ is sufficiently close to $(0, 0)$, then $y_i\approx h(x_i)$. These sets $X_n$, $Y_n$ can then be regarded as collections of sufficiently accurate pairs of input-output values of the true manifold $h$.

Observe that a center manifold $h$ needs not to be unique, as in general multiple solutions of the PDE \eqref{cm_pde} may exist. Nevertheless, the solutions may differ at most by a term exponentially decaying to zero, which may thus be neglected (in terms of approximation) if the points $(x_i, y_i)$ are sufficiently close to the origin. To be more precise, it may be that the points in $X_N$, $Y_N$ of larger magnitude lie on different manifolds, but the distance between these different branches becomes negligible compared to the approximation error, which is assumed to be algebraic (see Theorem \ref{th:data_based_cm1}).

\subsection{Definition, existence and optimality of the approximant}
For the construction of the approximant we employ here matrix-valued kernels, which are suitable to approximate vector-valued functions.
Details on kernel-based approximation can be found e.g.~in~\cite{Wendland2005}, and the extension to the vectorial case is detailed e.g.~in~\cite{Micchelli2005, Wittwar2018}. Here we recall only that a positive definite matrix-valued kernel on a set $\Omega$ is a function $\kernel : \Omega \times \Omega \rightarrow \R^{m \times m}$ such that 
\begin{itemize}
\item $\kernel(x, y) = \kernel(y,x)^T$ for all $x,y \in \Omega$, 
\item the matrix $\left(\kernel(x_i, x_j)\right)_{i,j=1}^N\in\R^{mN\times mN}$ is positive semidefinite for any set $\{x_1, \dots, x_N\}\subset\Omega$ of pairwise distinct points, for all $ N \in\N$.
\end{itemize}
Associated to a positive definite kernel there is a unique Hilbert space $\mathcal H$ of vector valued functions $\Omega\to\R^m$, named native space, where the kernel is reproducing, meaning that $\kernel(\cdot, x) \alpha$ is the Riesz 
representer of the directional point evaluation $\delta_x^{\alpha}(f):= \alpha^T f(x)$, for all $\alpha\in\R^m$, $x\in\Omega$. This is the functional setting where the following approximation process is defined.

In our case we aim at an $\hat h$ that provides an error bound like \eqref{eq:error_hypothesis1}, and to achieve this we consider a kernel-based approximant that includes derivative information. For now we concentrate on a case that will provide an error bound of order $q=2$ since it is somehow more natural given the definition of the center manifold, and we will discuss the extension to general $q$ in the following.

To make this formal, we consider a twice continuously differentiable matrix-valued kernel $\kernel$ on $\Omega$ and define the approximant as the minimizer of a cost functional on $\mathcal H$, which is defined according to the following three requirements. 

First, the conditions in \eqref{eq: center manifold properties} i.e., $\hat h(0) = 0, D \hat h(0) = 0$, should be exactly met, since they are known properties of the center manifold.

Second, the approximant should provide a small distance between the measured output $y_i$ and the prediction $\hat h(x_i)$, for all $(x_i, y_i)$ in the dataset.
To this end we introduce for each index $i$ a positive definite weight matrix $\omega_i \in \R^{m \times m}$ which associates a possibly different importance to different data points and to different output dimensions. 
Using these matrices, we define a term $\sum\limits_{i=1}^N (\hat h(x_i) - y_i)^T \omega_i (\hat h(x_i) - y_i)$ in the cost functional that measures the deviation of the predictions of the approximant from the data.

Third, as discussed at the beginning of this section we do not know if a data pair $(x_i,y_i)$ lies exactly on the center manifold, i.e., if $y_i = h(x_i)$ holds, and thus an approximant which merely interpolates the data seems ill-suited for our purposes. For this reason, we introduce a regularization term in the functional to allow non-interpolatory solutions, as long as they have a small norm.

All together, we compute our approximant by minimizing over $s\in \cH$ the functional $J : \cH \rightarrow \R$ defined by
\begin{equation}
 J(s) := \| s \|_{\cH}^2 + \sum\limits_{i=1}^N (s(x_i) - y_i)^T \omega_i (s(x_i) - y_i), \label{eq: Functional}
\end{equation}
under the constraint $s(0) = 0, Ds(0) = 0$.

\begin{remark}
The weight matrices $\omega_i$ can either be chosen manually, or a regularizing function $r : \Omega \rightarrow \R^{m \times m}$ can be prescribed
such that $\omega_i := r(x_i)$ is symmetric and positive definite. In our numerical examples in Section \ref{sec: Numerical examples} we chose a constant 
regularization 
function, i.e.
\begin{equation}\label{rem:constant_regularization}
 \omega_i = r(x_i) = \lambda I_m
\end{equation}
for some $\lambda > 0$ independent of $i$. However, one might consider a more general approach, where the weight increases as the data tends to the origin,
i.e. $\omega_i \succeq \omega_j$ if $ \|x_i\| \leq \|x_j\|$, to give more importance to the samples that are expected to represent more closely the center manifold.
\end{remark}

We now show that the problem that we just defined via \eqref{eq: Functional} has a unique minimizer $\hat h$. The proof is taken from \cite{Wittwar2020thesis}, and we report it here for the sake of completeness.

\begin{theorem}[Representer Theorem for the approximation of the center manifold]\label{th:representer}
The minimization problem
\begin{align*}
\begin{array}{lll}
\min\limits_{s \in \cH} & J(s) &= \| s \|_{\cH}^2 + \sum\limits_{i=1}^N (s(x_i) - y_i)^T \omega_i (s(x_i) - y_i)\\
\text{ s.t. } &s(0) &= 0\\
& Ds(0) &= 0
\end{array}
\end{align*}
has a unique solution $\hat h \in \cH$ which can be represented as
\begin{align}
 \hat h(x) & = \sum\limits_{i=1}^{N+1} \kernel(x,x_i)\alpha_i + \sum\limits_{i=1}^d \partial_i^{(2)}\kernel(x,0)\beta_{i}, \label{eq: representer thm}
\end{align}
where $x_{N+1} := 0$, and where the superscript $\partial^{(2)}$ denotes that the derivative with regards to the second kernel component is taken.
Moreover, the coefficient vectors $\alpha_i, \beta_{i} \in \R^m$ can be computed by solving the system
\begin{equation}\label{eq:thelinearsystem}
 \begin{pmatrix}
  A + W & B \\
  B^T & C 
 \end{pmatrix}
 \begin{pmatrix}
  \boldsymbol{\alpha} \\
  \boldsymbol{\beta}
 \end{pmatrix}
  =
 \begin{pmatrix}
  \boldsymbol{Y} \\
  \boldsymbol{Z}
 \end{pmatrix},
\end{equation}
with
\begin{align*}
 A & := \left( \kernel(x_i,x_j) \right)_{i,j} \in \R^{m(N+1) \times m(N+1)}, \\
 W & := \diag\left( \omega_1^{-1},\dots, \omega_N^{-1},0 \right) \in \R^{m(N+1) \times m(N+1)}, \\
 B & := \left( \partial_{j}^{(2)}\kernel(x_i,0) \right)_{i,j} \in \R^{m(N+1) \times d m}, \\
 C & := \left( \partial_{i}^{(1)}\partial_{j}^{(2)}k(0,0) \right)_{i,j} \in \R^{d m \times d m}, \\
 \boldsymbol{Y} & := (y_1^T, \dots,y_n^T,0)^T \in \R^{m(N+1)}, \\
 \boldsymbol{Z} & := 0 \in \R^{d m \times m}.
 \end{align*}
\end{theorem}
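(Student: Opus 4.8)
The plan is to follow the classical strategy for representer theorems, adapted to the matrix-valued native space $\cH$ and to the two interpolation-type constraints $s(0)=0$ and $Ds(0)=0$. The whole argument rests on two facts: that $J$ is strictly convex and coercive, and that $J$ together with the constraints depends on $s$ only through the finitely many bounded linear functionals $s\mapsto s(x_i)$ ($1\le i\le N$), $s\mapsto s(0)$, and $s\mapsto \partial_j s(0)$ ($1\le j\le d$). First I would settle existence and uniqueness abstractly. The feasible set $\mathcal F:=\{s\in\cH : s(0)=0,\ Ds(0)=0\}$ is a linear subspace containing $0$; it is closed because all of the listed functionals are continuous on $\cH$ --- point evaluations by the reproducing property, and the derivative evaluations because $\kernel\in C^2(\Omega\times\Omega)$, so that $\partial_j^{(2)}\kernel(\cdot,0)\beta$ is the Riesz representer of $s\mapsto\beta^T\partial_j s(0)$. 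Since $s\mapsto\|s\|_{\cH}^2$ is strictly convex and coercive and the data term is convex and continuous (each $\omega_i$ being positive definite), $J$ is strictly convex, coercive and continuous on the closed convex set $\mathcal F$, which yields a unique minimizer $\hat h\in\mathcal F$.

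Next I would prove that this minimizer lies in the finite-dimensional space appearing in \eqref{eq: representer thm}. Set $x_{N+1}:=0$ and let $V$ be the span of the representers $\kernel(\cdot,x_i)\alpha$ for $1\le i\le N+1$, $\alpha\in\R^m$, together with $\partial_j^{(2)}\kernel(\cdot,0)\beta$ for $1\le j\le d$, $\beta\in\R^m$. By the (differentiated) reproducing property, the value of each of the functionals above on $s$ equals its value on the orthogonal projection $s_V$ of $s$ onto $V$; writing $s=s_V+s_\perp$ we therefore get $s(x_i)=s_V(x_i)$, $s(0)=s_V(0)$ and $\partial_j s(0)=\partial_j s_V(0)$, so $s_V$ is feasible whenever $s$ is and has the same data term, while $\|s\|_{\cH}^2=\|s_V\|_{\cH}^2+\|s_\perp\|_{\cH}^2$. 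Hence $J(s_V)\le J(s)$ with equality only if $s_\perp=0$. Applied to the unique minimizer $\hat h$, this forces $\hat h=\hat h_V\in V$, i.e.\ $\hat h$ has exactly the form \eqref{eq: representer thm}.

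It then remains to identify the coefficient equations \eqref{eq:thelinearsystem}. Substituting \eqref{eq: representer thm} and using the reproducing property one obtains $\|\hat h\|_{\cH}^2=\boldsymbol\alpha^T A\boldsymbol\alpha+2\boldsymbol\alpha^T B\boldsymbol\beta+\boldsymbol\beta^T C\boldsymbol\beta$, together with $\hat h(x_p)=(A\boldsymbol\alpha)_p+(B\boldsymbol\beta)_p$ and, using the symmetry $\kernel(x,y)=\kernel(y,x)^T$ to write $\partial_\ell^{(1)}\kernel(0,x_i)=[\partial_\ell^{(2)}\kernel(x_i,0)]^T$, the stacked Jacobian $D\hat h(0)=B^T\boldsymbol\alpha+C\boldsymbol\beta$. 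The constraint $D\hat h(0)=0$ is thus precisely the second block row $B^T\boldsymbol\alpha+C\boldsymbol\beta=\boldsymbol Z=0$. For the first block row I would form the Lagrangian of the finite-dimensional constrained quadratic program, with $\alpha_{N+1}$ and $\boldsymbol\beta$ playing the role of multipliers for $\hat h(0)=0$ and $D\hat h(0)=0$: stationarity in the unconstrained coefficients gives for $i\le N$ the residual identity $\omega_i^{-1}\alpha_i+\hat h(x_i)=y_i$, which are the first $N$ blocks of $(A+W)\boldsymbol\alpha+B\boldsymbol\beta=\boldsymbol Y$, while the last block (with $\omega_{N+1}^{-1}=0$ and $y_{N+1}=0$) simply restates the primal constraint $\hat h(0)=0$. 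This assembles \eqref{eq:thelinearsystem}.

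The step I expect to be most delicate is the bookkeeping of this matrix-valued, derivative-augmented KKT system: on one hand one must justify carefully the differentiated reproducing property and the continuity of the derivative functionals, which is exactly where the hypothesis $\kernel\in C^2$ is used; on the other hand one must match the coefficients $\alpha_{N+1}$ and $\boldsymbol\beta$ with the correct Lagrange multipliers so that the zero weight $\omega_{N+1}^{-1}=0$ and the vanishing right-hand side $\boldsymbol Z=0$ reproduce the two constraints $\hat h(0)=0$ and $Ds(0)=0$ rather than spurious equations. Uniqueness of $\hat h$ as a function is already guaranteed by strict convexity; if one also wants the coefficient vector $(\boldsymbol\alpha,\boldsymbol\beta)$ to be unique, this requires the generators of $V$ to be linearly independent, which follows from strict positive definiteness of $\kernel$ together with the distinctness of the $x_i$ and the independence of the derivative evaluations.
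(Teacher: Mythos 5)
Your proposal is correct, but it takes a genuinely different route from the paper. The paper's proof is a direct verification: it starts from the ansatz \eqref{eq: representer thm} with coefficients assumed to solve \eqref{eq:thelinearsystem}, reads off from the linear system that $\hat h(X)=\boldsymbol{Y}-W\boldsymbol{\alpha}$, $\hat h(0)=0$, $D\hat h(0)=0$ (so $\hat h$ is feasible), and then, for an arbitrary feasible competitor $s=\hat h+g$, expands $J(s)=J(\hat h)+\|g\|_{\cH}^2+g(X)^TW^{+}g(X)+2\bigl((\hat h(X)-\boldsymbol{Y})^Tg(X)+\langle\hat h,g\rangle\bigr)$ and shows the cross term vanishes using the reproducing property and $g(0)=0$, $Dg(0)=0$; this gives optimality and uniqueness in one completing-the-square step. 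You instead argue in three stages: abstract existence/uniqueness of the minimizer on the closed feasible subspace via strict convexity and coercivity, reduction to the finite-dimensional span $V$ of the representers by orthogonal projection, and derivation of \eqref{eq:thelinearsystem} as the KKT system of the resulting quadratic program, with $\alpha_{N+1}$ and $\boldsymbol{\beta}$ as multipliers. Your route is constructive (it explains \emph{where} the linear system comes from and why the zero weight $\omega_{N+1}^{-1}=0$ and the zero right-hand side $\boldsymbol{Z}$ appear), at the cost of more machinery; the paper's route is shorter but presupposes the answer and leaves the solvability of \eqref{eq:thelinearsystem} implicit --- a gap your existence argument actually fills. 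The shared core is the same orthogonality identity: your first-order stationarity condition $\sum_{i=1}^{N}\bigl(\alpha_i+\omega_i(\hat h(x_i)-y_i)\bigr)^Tg(x_i)=0$ for feasible $g$ is exactly the paper's cancellation $(\hat h(X)-\boldsymbol{Y})^TW^{+}g(X)=-\langle\hat h,g\rangle$. One small caveat: to pass from that stationarity condition to the blockwise residual identities $\omega_i^{-1}\alpha_i+\hat h(x_i)=y_i$ you need the evaluation functionals at $x_1,\dots,x_N$ restricted to the feasible subspace to be surjective onto $\R^{mN}$ (equivalently, the linear independence of the representers that you invoke only for uniqueness of the coefficients); without it the KKT system still has a solution yielding the minimizer, but the argument should be phrased as exhibiting \emph{a} solution of \eqref{eq:thelinearsystem} rather than deriving each block identity individually.
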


\begin{proof}
 Let $\hat h \in \cH$ be given by \eqref{eq: representer thm}. Then it follows that $\hat h(X) = \boldsymbol{Y} - W \boldsymbol{\alpha}$, 
$\hat h(0) = 0$ and $D\hat h(0) = 0$, i.e. $\hat h$ lies in the admissible set for the minimization problem. Let $s \in \cH$ be an alternative admissible element. Then 
there exists $g \in \cH$ with $g(0) = 0$, $Dg(0) = 0$ such that $s = \hat h + g$. It now holds
 \begin{align*}
  J(s) & = J(\hat h+g) = (\hat h(X) + g(X) - \boldsymbol{Y})^T W (\hat h(X) + g(X) - \boldsymbol{Y})^T + \| \hat h + g \|_{\cH}^2 \\
  & = (\hat h(X) - \boldsymbol{Y})^TW^{+}(\hat h(X) - \boldsymbol{Y}) + \|\hat h\|_{\cH}^2 \\
  & \quad + 2(\hat h(X) - \boldsymbol{Y})^T W^{+}g(X) + g(X)^TW^{+}g(X) + \|g\|_{\cH}^2 + 2\langle \hat h,g\rangle \\
  & = J(\hat h) + \|g\|_{\cH}^2 + g(X)^TW^{+}g(X) + 2( (\hat h(X) - \boldsymbol{Y})^Tg(X) + \langle \hat h,g\rangle).
 \end{align*}
 Hence, it is sufficient to show that
 \begin{align*}
  (\hat h(X) - \boldsymbol{Y})^TW^{+}g(X) = - \langle \hat h,g \rangle.
 \end{align*}
 Since $\kernel$ is the reproducing kernel of $\cH$, and since $Dg(0) = 0$, it holds
 \begin{align*}
  \langle \hat h,g\rangle = \left\langle \sum\limits_{i=1}^{N+1} \kernel(\cdot,x_i)\alpha_i + \sum\limits_{i=1}^{m} \partial_i^{(2)} \kernel(\cdot,0)\beta_i, g 
\right\rangle = g(X)^T \boldsymbol{\alpha}.
 \end{align*}
 On the other hand we have
 \begin{align*}
  (\hat h(X) - \boldsymbol{Y})^TW^{+}g(X) = (-W\boldsymbol{\alpha})^TW^{+}g(X) = \alpha^T WW^{+}g(X) = \alpha^Tg(X).
 \end{align*}
Therefore,
\begin{align*}
 J(s) = J(\hat h) + \|g\|_{\cH}^2 + g(X)^TW^{+}g(X) \geq J(\hat h)
\end{align*}
and equality holds if and only if $g = 0$. Hence, $\hat h$ is the unique minimizer of $J$.
\end{proof}

Observe in particular that the expression \eqref{eq: representer thm} allows easy manipulation of the approximant $\hat h$. For example, first order derivatives are simply given by the formula
\begin{align}
\nonumber D\hat h(x) & =  \sum\limits_{i=1}^{N+1} D^{(1)}\kernel(x,x_i)\alpha_i + \sum\limits_{i=1}^m D^{(1)}\partial_i^{(2)}\kernel(x,0)\beta_{i}.
\end{align}

\begin{remark}[Higher order derivatives]\label{rem:higher_order}
Similarly to the case defined in Theorem \ref{th:representer}, it is possible to define a modified optimization problem where the same cost functional $J$ from \eqref{eq: Functional} is used, but additionally also constraints on higher order derivatives are set. More specifically, for any $p>1$, $p\in\N$, one may consider the set of multi-indexes of the form $a:=(a_1, \dots, a_d)\in \N^d$ with $|a|:= \sum_{i=1}^d a_i$, and require that 
\begin{align}\label{eq:high_ord_cond}
\partial^{a} \hat h_i(0) = \partial^a h_i(0), \;\;1\leq i\leq m,\;\; |a|\leq p,
\end{align}
where $\partial^a f(x) :=\partial_{1}^{a_1}\cdot\dots \partial_{d}^{a_d} f(x)$ for $f:\R^d\to\R$.

Assuming that the kernel and the manifold $h$ are smooth enough to impose these conditions, the optimization problem still has a unique solution of the form \eqref{eq: representer thm}, but where now the second sum is a linear combination of all the $p$-th order derivatives of $K$. Moreover, the coefficients can still be found by solving a linear system like \eqref{eq:thelinearsystem}, augmented with blocks corresponding to higher order derivatives.

The main limitation of this approach is that imposing the conditions \eqref{eq:high_ord_cond} requires the knowledge of $\partial^a h_i(0)$, $1\leq i\leq m$, for all $|a| \leq p$ and $p>1$. These values are not known in general, unlike in the case $p=1$ where they are defining properties of the center manifold.
\end{remark}

\subsection{Selection of the sampling points}\label{sec:greedy}
If the technique of the previous section is used as it is, the approximant \eqref{eq: representer thm} is given by an expansion with $N+1 + m$ terms, where 
${N}$ is the number of points in the training set $X_N, Y_N$. Therefore, it might be inefficient to evaluate a model that is built using a large dataset. Furthermore, the 
computation of the coefficients in \eqref{eq: representer thm} requires the solution of the linear system \eqref{eq:thelinearsystem} with $m (N+1) + m^2$ rows and columns, and which can be severely ill-conditioned for non well-placed points $X_N$.

To mitigate both problems, we employ an algorithm that aims at selecting small subsets $X_{\hat N}, Y_{\hat N}$ of $X_N$, $Y_N$ such that the approximant computed with 
these sets is a sufficiently good approximation of the one which uses the full sets. The algorithm selects the points in a greedy way, i.e., one point at a 
time 
is 
selected and added to the current training set. In this way, it is possible to identify a good set without the need to solve a nearly infeasible combinatorial 
problem. 

The selection is performed using the $P$-greedy method (see \cite{DeMarchi2005}, and \cite{Wittwar2019x} for the matrix-valued case) applied to the kernel $K$, such that the set of points is selected before the 
computation 
of the approximant.
The number of points $\hat N$, and therefore the expansion size and evaluation speed,
is depending on a prescribed target accuracy $\varepsilon_{tol}>0$. 
For details on the method implementation and its convergence properties we refer to \cite{DeMarchi2005,SH16b,Wenzel2020,Wittwar2019x,Santin2021}.

\subsection{A kernel-based center-manifold theorem}\label{sec:kernel_based_cm}
Our goal is now to derive error bounds for the approximation process of Theorem \ref{th:representer}. We aim to use these results in combination with Theorem \ref{th:data_based_cm1} and Theorem \ref{th:perturbation}, so we need bounds which contain terms that depend on $\|x\|^q$, with a suitable power $q$.

Comprehensive approximation results are available if plain kernel-based interpolation or regularized interpolation are used instead of the approximant of Theorem \ref{th:representer}, i.e., if no constraint is enforced. For scalar-valued target functions we refer to \cite{Wendland2005} for approximation results on interpolation and to \cite{WendlandRieger2005} to account additionally for a constant regularization term. The extension to vector-valued outputs and general regularization functions has been very recently presented in \cite{Wittwar2020thesis}.

In general terms, to obtain convergence we first assume that the sets $X_{\hat N}$ are increasingly fine. A way to measure this is to require that the fill 
distance $\theta_{\hat N}$ of $X_{\hat N}$ in $\Omega$, defined as
\begin{align*}
\theta_{\hat N}:=\sup\limits_{x\in\Omega}\min\limits_{y\in X_{\hat N}}\|x-y\|_2,
\end{align*}
is decreasing as ${\hat N}$ increases. Observe that in our case it is sufficient that $\Omega$ is a neighborhood of zero and not the full 
space where the ODE is defined. 

Under the assumption that $h$ is in the Hilbert space $\mathcal H$, it holds
\begin{align}\label{eq:base_error_bound}
\sup\limits_{x\in\Omega}\|h(x)-\hat h(x)\|\leq c' F(\theta_{\hat N}) \|h\|_{\mathcal H}=: c F(\theta_{\hat N}), 
\end{align} 
where $F(\theta_{\hat N})$ is a value that converges to zero as $\theta_{\hat N}$ converges to zero, with a rate that depends on the particular kernel. We remark that both 
exponential and algebraic rates of convergence can be achieved (see e.g. \cite{Wendland2005} for a detailed presentation of this kind of error bounds in the scalar-valued case, and \cite{Wittwar2018} for an extension to the vector-valued case). 

As a special case, one can consider sets of points such that 
\begin{align}\label{eq:fill_dist_decay}
\theta_{\hat N} \leq c_d {\hat N} ^{-1/d},    
\end{align}
with a dimension-dependent constant $c_d$. In this case, the bound on the right hand-side of \eqref{eq:base_error_bound} becomes a function of ${\hat N}$ only. We 
remark that the $P$-greedy algorithm of Section \ref{sec:greedy} selects sets of points that have this property at a very convenient computational cost (see 
\cite{SH16b}).

The condition $h\in\mathcal H$ may be difficult to verify or to achieve in general. Nevertheless for some kernels one gets the notable situation where $\mathcal H$ is norm equivalent to the Sobolev space $H^t(\Omega;\R^m)$ with some $t>d/2$. We refer to \cite{Wendland2005} for a discussion of this property for $m=1$, and to \cite{Wittwar2020thesis} in the general case.

In this case the convergence results require only the smoothness 
assumptions $h\in H^t(\Omega;\R^m)$, which seems reasonable in this context, and the error bound \eqref{eq:base_error_bound} reads
\begin{align*}
\sup\limits_{x\in\Omega}\|h(x)-\hat h(x)\|\leq c \theta_{\hat N} ^{t-d/2}.
\end{align*}

This result is clearly not suitable to be used in combination with Theorem \ref{th:data_based_cm1} and Theorem \ref{th:perturbation}, since the upper bounds on the error are uniform in $\Omega$, while the cited theorems require the error to decrease towards $x=0$ with a given speed. The situation is different with the approximant defined in Theorem \ref{th:representer} and in this case we can prove the following theorem, which actually provides suitable bounds with $q=2$. 

\begin{theorem}\label{th:error}
 Let $X_{\hat N} \subset \Omega \setminus \{0\}$ be a set of pairwise distinct points and let $\theta_{\hat N}$ be its corresponding fill distance. Assume that $K$ is such that $\mathcal H$ is norm equivalent to the Sobolev space $H^t(\Omega;\R^m)$ with $t > d/2 + 2$.
 Then, if $h \in H^{t}(\Omega;\R^m)$, and if $\hat{h}$ is the approximant of Section 3, it holds
 \begin{align}\label{eq:approx_error_bound}
     \|h(x) - \hat{h}(x)\| \leq C \theta_{\hat N}^{-2} \left(\theta_{\hat N}^{t-d/2} + r_{\max} \right)\|h\|_{H^t} \|x\|^2 \;\;\forall x\in\Omega,
 \end{align}
where $C>0$ is a constant independent of $\hat N$, $h$ and $x$, and where 
\begin{align*}
r_{\max}:= \max_{\xi \in B_{\|x\|}(0)}\left\{\sqrt{\eta}: \eta \text{ is an eigenvalue of } r(\xi)\in\R^{m\times m}\right\},
\end{align*}
i.e., $r_{\max} = \lambda$ if the constant regularization function \eqref{rem:constant_regularization} is used.

In particular, if $r$ is chosen so that $r_{\max} \leq \theta_{\hat N} ^{t-d/2}$, then we have the bound
 \begin{align*}
     \|h(x) - \hat{h}(x)\| \leq 2 C \theta_{\hat N}^{t-d/2-2} \|h\|_{H^t} \|x\|^2 \;\;\forall x\in\Omega.
 \end{align*}
\end{theorem}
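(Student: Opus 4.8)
The plan is to exploit the fact that the error vanishes to second order at the origin, to reduce the claim to a uniform bound on the second derivatives of the error, and then to invoke the (regularized) vector-valued kernel error estimates in the Sobolev setting. I would set $e := h - \hat h$ and first record that $e$ vanishes to second order at the origin: by the center manifold properties \eqref{eq: center manifold properties} we have $h(0)=0$ and $D_x h(0)=0$, while the constraints imposed in Theorem \ref{th:representer} give $\hat h(0)=0$ and $D\hat h(0)=0$, so that $e(0)=0$ and $De(0)=0$. Since $t>d/2+2$, the Sobolev embedding $H^t(\Omega;\R^m)\hookrightarrow C^2$ together with the norm equivalence $\cH\cong H^t(\Omega;\R^m)$ ensures $e\in C^2$, and Taylor's theorem with integral remainder yields
\[
\|e(x)\| \leq \tfrac12 \|x\|^2 \sup_{\xi\in B_{\|x\|}(0)}\|D^2 e(\xi)\|.
\]
This isolates the factor $\|x\|^2$ and, incidentally, explains why the ball $B_{\|x\|}(0)$ appears in the definition of $r_{\max}$: the remainder only probes second derivatives of $e$ inside that ball.

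The heart of the argument is then to bound $\sup_{B_{\|x\|}(0)}\|D^2 e\|$ by $C\,\theta_{\hat N}^{-2}(\theta_{\hat N}^{t-d/2}+r_{\max})\|h\|_{H^t}$. I would obtain this from a Sobolev sampling inequality for the second derivatives of $e$, valid precisely because $t>d/2+2$, of the schematic form
\[
\|D^2 e\|_{L^\infty} \lesssim \theta_{\hat N}^{\,t-d/2-2}\,|e|_{H^t} + \theta_{\hat N}^{-2}\max_i\|e(x_i)\|,
\]
where the discrete residual term inevitably carries the factor $\theta_{\hat N}^{-2}$ associated with recovering a second derivative from point data; the vector-valued, regularized version of this machinery is available in \cite{Wittwar2020thesis}, with scalar precursors in \cite{Wendland2005,WendlandRieger2005}. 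It then remains to supply the two auxiliary estimates appearing on the right-hand side.

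For the Sobolev seminorm I would use optimality: $h$ is feasible for the minimization of Theorem \ref{th:representer}, since it satisfies the constraints and lies in $\cH$ by norm equivalence. Taking the data to lie on the manifold ($y_i=h(x_i)$, so that the data term of $J(h)$ vanishes), this gives $\|\hat h\|_{\cH}^2\leq J(\hat h)\leq J(h)=\|h\|_{\cH}^2$, and the triangle inequality with norm equivalence yields $|e|_{H^t}\lesssim\|e\|_{\cH}\leq 2\|h\|_{\cH}\lesssim\|h\|_{H^t}$, producing the first term $\theta_{\hat N}^{t-d/2-2}\|h\|_{H^t}$. For the residual I would use the same optimality inequality in the sharper form $\sum_i (\hat h(x_i)-y_i)^T\omega_i(\hat h(x_i)-y_i)\leq\|h\|_{\cH}^2$; estimating each summand from below by the smallest eigenvalue of $\omega_i=r(x_i)$ shows that $\|\hat h(x_i)-y_i\|$ is controlled by $\|h\|_{\cH}$ divided by the square root of that eigenvalue, and collecting these bounds over the weight matrices associated with $B_{\|x\|}(0)$ yields the factor $r_{\max}$ and hence the second term $\theta_{\hat N}^{-2}r_{\max}\|h\|_{H^t}$. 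Substituting both estimates into the sampling inequality and then into the Taylor bound gives \eqref{eq:approx_error_bound}, and the final displayed inequality follows at once by inserting $r_{\max}\leq\theta_{\hat N}^{t-d/2}$, which makes $\theta_{\hat N}^{t-d/2}+r_{\max}\leq 2\theta_{\hat N}^{t-d/2}$.

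The step I expect to be the main obstacle is the second-derivative sampling inequality with its localization. The genuinely delicate point is reconciling the global fill distance $\theta_{\hat N}$ of $X_{\hat N}$ in $\Omega$ with the restriction to $B_{\|x\|}(0)$ forced by the Taylor remainder, and in particular ensuring that the exactly known Hermite data $e(0)=0$, $De(0)=0$ at the origin are used to control $\|D^2 e\|$ near $x=0$ with the correct powers of $\theta_{\hat N}$. A second, more routine difficulty is the precise dependence of the residual bound on the eigenvalues of $r$, which is the bookkeeping that pins down the constant $r_{\max}$.
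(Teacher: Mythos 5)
Your proposal follows essentially the same route as the paper: use the matching constraints $h(0)=\hat h(0)=0$, $Dh(0)=D\hat h(0)=0$ to write a second-order Taylor remainder that extracts the factor $\|x\|^2$, and then bound the second derivatives of $h-\hat h$ on $B_{\|x\|}(0)$ by the regularized vector-valued sampling/error estimate (the paper simply cites Theorem 3.2.6 of \cite{Wittwar2020thesis} for exactly the bound $C\theta_{\hat N}^{-2}(\theta_{\hat N}^{t-d/2}+r_{\max})\|h\|_{H^t}$, which you instead sketch from first principles via optimality of $\hat h$). The extra detail you provide about deriving that sampling inequality is not needed in the paper's argument but is consistent with it, so the proposal is correct.
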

\begin{proof}
 By assumption on $\hat{h}$ we have
 \begin{align*}
     \hat{h}(0) = h(0) = 0, \quad D\hat h(0) = D h(0) =0,
 \end{align*}
 and therefore, by using a Taylor expansion with suitable remainder, there exists $\xi \in B_{\|x\|}(0):=\{ x^{\prime} \in \Omega \, : \, \| x^{\prime} \| \leq \| x \| \}$ such that 
 \begin{align}\label{eq:intermediate}
     \| h(x) - \hat{h}(x)\| \leq \max\limits_{a\in \N^d, |a| = 2 }\;\;\left(\max\limits_{\xi\in B_{\|x\|}(0)} \|\partial^{a} h(\xi) - \partial^{a} \hat{h}(\xi) \|\right) \|x\|^2.
 \end{align}
Now, Theorem 3.2.6 in \cite{Wittwar2020thesis} for $|a|=2$ provides the bound
\begin{align*}
\max\limits_{\xi\in B_{\|x\|}(0)} \|\partial^{a} h(\xi) - \partial^{a} \hat{h}(\xi) \|
\leq C \theta_{\hat N}^{-2} \left(\theta_{\hat N}^{t-d/2} +r_{\max} \right)\|h\|_{H^s},
 \end{align*}
and inserting this result in \eqref{eq:intermediate} proves \eqref{eq:approx_error_bound}. 

The final bound is obtained by substituting the upper bound on $r_{\max}$.
\end{proof}

\begin{remark}[Higher values of $q$]
As discussed in Remark \ref{rem:higher_order}, it is possible to construct approximants with higher-order derivative constraints. In this case, if all the derivatives $\partial^a$ with $|a|\leq p$ are used, it is possible to adapt the proof of Theorem \ref{th:error} to obtain an error bound like
\begin{align*}
     \|h(x) - \hat{h}(x)\| \leq c \theta_{\hat N}^{t-1-p-d/2}\|x\|^{1+p}.
\end{align*}
We avoid reporting the full proof of this fact since it requires no technical modification over the proof of Theorem \ref{th:error}, but rather just the use of a more involved index set. 
\end{remark}

\begin{remark}[Error bounds in terms of $\hat N$]\label{rem:error_vs_n}
If the sequence of sets of points $X_{\hat N}$ satisfies the condition \eqref{eq:fill_dist_decay}, then also in the case of Theorem \ref{th:error} the bound can be rewritten in terms of $\hat N$, and it reads
 \begin{align*}
     \|h(x) - \hat{h}(x)\| \leq c' {\hat N}^{-t/d+2/d+1/2}\|x\|^2,
 \end{align*}
 for a suitable constant $c'$. We recall that the condition \eqref{eq:fill_dist_decay} is satisfied by the points selected by the $P$-greedy algorithm (see \cite{Wenzel2020}).
\end{remark}

\section{Numerical Examples} \label{sec: Numerical examples}
We test now our method on three different examples. 
In each example we start by identifying the splitting in \eqref{eq:full_order_system} manually, and then by solving numerically the IVPs obtained by coupling the ODE system \eqref{eq:full_order_system} with the $2^n$ different initial values $(x_0, y_0)\in\set{\pm 0.8}^n$, where $n$ is the dimension of the full system in each example. As numerical solver we use in all cases the implicit Euler scheme for initial time $t_0 = 0$, final time $T = 1000$ and with the time step $\Delta t = 0.1$. The points $(x_i, y_i)$ lying on the computed numerical trajectories are first filtered to keep only those values that are sufficiently close to the equilibrium $0\in\R^n$, and this is done by removing all points outside of the cube $[-0.1, 0.1]^n$. These first steps generate a dataset of $N$ pairs $(x_i, y_i)$, on which the greedy algorithm of Section \ref{sec:greedy} is run to possibly further reduce the set of points to a final number of $\hat N$ pairs. The tolerance $\varepsilon_{tol}$ which determines the stopping of the greedy algorithm (see Section \ref{sec:greedy}) is set to values that are specified in each example.
The resulting datasets $(X_{\hat N}, Y_{\hat N})$ are finally used to construct the approximants by solving the linear system \eqref{eq:thelinearsystem}, using in each case a constant regularization function with suitable values of $\lambda$  (see Remark \ref{rem:constant_regularization}).
Moreover, all the following center manifolds are scalar-valued, and thus plain positive definite kernels (i.e., matrix valued kernels with $m=1$) are used.
Further details on the setting and the parameters used to build and visualize our approximation to the center manifold are specified in each single example.

As a comparison, we also compute an algebraic approximation of the center manifold with the toolbox \cite{roberts_software}, which provides an approximated expansion of $h$ on the $d$-dimensional monomial basis. The toolbox allows to compute approximants up to a given monomial degree, and in the following we always use the maximal possible value, which is $5$.  The toolbox requires as inputs the right hand side of the ODE, the value of the eigenvalues with zero real part, and a basis of the corresponding eigenspaces both for the linearization matrix and for its transpose.

To compute errors, we use an equally spaced test grid $X_{test}$ made of $1001$ points in $[-0.1, 0.1]$ for the first two examples, and of $101^2$ points in $[-0.1, 0.1]^2$ in the third one. 
Additionally, using the PDE \eqref{cm_pde} we define the pointwise residual
\begin{equation}\label{eq:res_error}
 r(x) := D_x\hat h(x)\left(L_1x +N_1(x,\hat h(x))\right) - \left(L_2\hat h(x) +  N_2(x,\hat h(x))\right),
\end{equation}
and we use it as a further way to quantify how well the approximated manifold models the true one.

\begin{remark}[Polynomial kernel]
Since an algebraic approximation of the manifold is sufficient in many cases, the polynomial kernel $k(x, y):= (1 + \gamma x^T y)^p$, $\gamma>0$, $p>0$, seems particularly promising for the task. This kernel is positive definite, but not strictly positive definite. Moreover, the approximant \eqref{eq: representer thm} in this case takes the particularly simple form
\begin{align*}
\hat h(x) = \sum_{i=1}^{\hat N} k(x, x_i) \alpha_i + \alpha_{\hat N+1} + p \gamma \beta^T x, 
\end{align*}
i.e., the terms corresponding to the interpolation conditions in zero are adding a linear term to the approximant.

\end{remark}

\subsection{Example 1}\label{sec:example1} 
We consider the $2$-dimensional system 
\begin{equation} \label{eq: Example 1 system}
 \begin{split}
  \dot{x} & = L_1x + N_1(x,y) = 0 + xy \\
  \dot{y} & = L_2y + N_2(x,y) = -y - x^2.  
 \end{split}
\end{equation}
Observe that in this case an explicit form of the true center manifold is not known. 

Using the algebraic approximation toolbox \cite{roberts_software}, we get the approximation 
\begin{align}\label{eq:h_alg_1}
\hat h_{alg}(x)=-x^{2}-2  x^{4}+\mathcal O (x^5).
\end{align}
% Input of the toolbox's web interface:
% (u1 * u2, -u2 - u1^2)
% 0
% (1, 0)
% (1, 0)
% 5
% u1

We then generate the dataset for the construction of the kernel approximants, which contains $N = 38248$ data pairs. Starting from these we run the greedy algorithm with $\varepsilon_{tol} = 10^{-15}$ and with several setups: We use the polynomial kernels 
\begin{align*}
    k_{pol}^p(x,y) := (1 + xy/2)^p, \quad p = 4,5,6,
\end{align*}
and the Gaussian kernels 
\begin{align*}
    k_{Gauss}^{\varepsilon}(x,y) := e^{ - \varepsilon \| x-y\|^2}, \quad \varepsilon = 1,5,
\end{align*}
which result in the sets $X_{pol}^4,X_{pol}^5,X_{pol}^6,X_{Gauss}^1$ and $X_{Gauss}^5$, containing 
$14,15,15,7$ and $8$ points, respectively. The sets of target values are selected from $Y_N$ accordingly.

For each dataset we compute a corresponding approximation and residual, denoted as
\begin{align*}
\hat h_{poly}^p(x), r_{pol}^p(x) \quad p = 4,5,6, \quad \text{ and } \quad \hat h_{Gauss}^{\varepsilon}(x),r_{Gauss}^{\varepsilon}(x) \quad \varepsilon = 1,5,
\end{align*}
using the regularization parameter $\lambda := 10^{-10}$ for the Gaussian kernels and $\lambda :=10^{-13}$ for the polynomial kernels. 

As an example, for the kernels $k_{pol}^4$ and $k_{Gauss}^1$ the approximant and the corresponding residual are depicted in Figure~\ref{fig:example1}. In the 
figure we show also the approximation $\hat h_{alg}$ and the residual $r_{alg}(x)= 12 x^6+16 x^8$, which is obtained by manually computing the value in 
\eqref{eq:res_error} using the first two terms in \eqref{eq:h_alg_1}. It is clear from the left panel in Figure \ref{fig:example1} that the three approximants 
are very close. Moreover, the magnitude of the residual (right panel in Figure \ref{fig:example1}) is in all the three cases of the order of $10^{-5}$, with the 
algebraic approximation giving the smallest one. 

\begin{figure}[ht!]
\begin{center}
\begin{subfigure}{.45\textwidth}
\begin{tikzpicture}
    \begin{axis}[legend pos=outer north east,xlabel={x},ylabel={},title={},height=6cm,grid=both,grid style=dashed,xmin=-0.1,xmax=0.1, scaled x ticks=false, x 
tick label style = {/pgf/number format/.cd,fixed,precision=5}  ]
      \addplot[mark=none,color=red,line width=1.5] table[col sep=comma, x=input, y=manifold1] {example1_with_algebraic.csv};
      \addlegendentry{$\hat h_{poly}^4$};
      \addplot[mark=none,color=blue,dashed,line width=1.5] table[col sep=comma, x=input, y=manifold2] {example1_with_algebraic.csv};
      \addlegendentry{$\hat h_{Gauss}^1$};
      \addplot[mark=none,color=green,dotted,line width=1.5] table[col sep=comma, x=input, y=manifold3] {example1_with_algebraic.csv};
      \addlegendentry{$\hat h_{alg}$};
      \end{axis}
  \end{tikzpicture}
\end{subfigure}
\hfill
\begin{subfigure}{.45\textwidth}
\begin{tikzpicture}
    \begin{axis}[legend pos=outer north east,xlabel={x},ylabel={},title={},height=6cm,grid=both,grid style=dashed, scaled x ticks=false, x tick label style = 
{/pgf/number format/.cd,fixed,precision=5},xmin=-0.1,xmax=0.1]
      \addplot[mark=none,color=red,line width=1.5] table[col sep=comma, x=input, y=residual1] {example1_with_algebraic.csv};
      \addlegendentry{$r_{pol}^4$};
      \addplot[mark=none,color=blue,dashed,line width=1.5] table[col sep=comma, x=input, y=residual2] {example1_with_algebraic.csv};
      \addlegendentry{$r_{Gauss}^1$};
      \addplot[mark=none,color=green,dotted,line width=1.5] table[col sep=comma, x=input, y=residual3] {example1_with_algebraic.csv};
      \addlegendentry{$r_{alg}$};
      \end{axis}
  \end{tikzpicture}
\end{subfigure}
\end{center}
\caption{Approximations $\hat h_{poly}^4$ and $\hat h_{Gauss}^1$ of the center manifold (left) and corresponding residuals $r_{pol}^4$ and $r_{Gauss}^1$ (right) for the example of Section \ref{sec:example1}.}\label{fig:example1}
\end{figure}
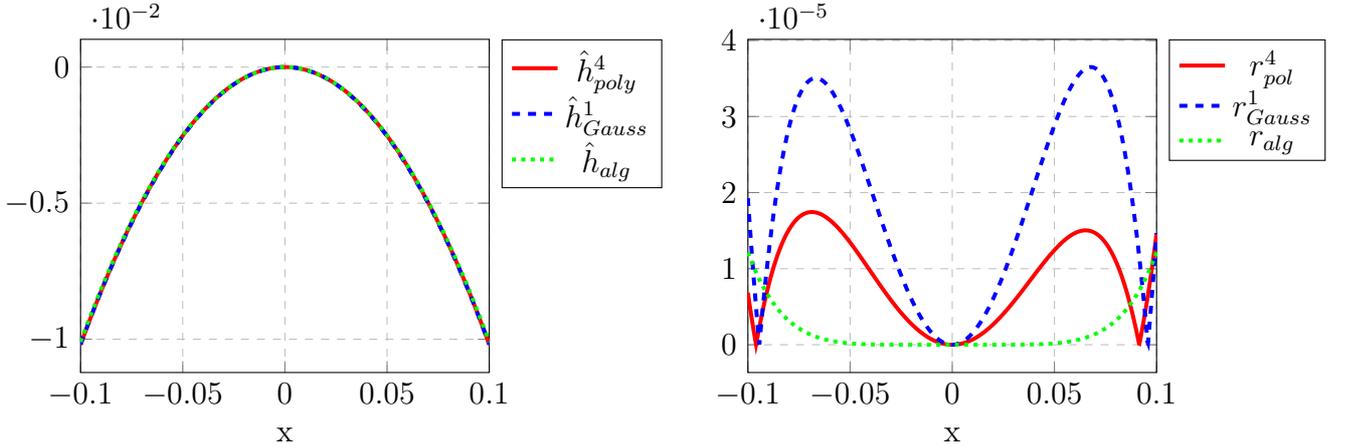

For each approximant, we first check how close $\hat h$ is to meet the constraints of the optimization problem of Theorem \ref{th:representer}. The first two columns of Table \ref{tab:example1_values} show the values $\hat h(0)$ and $D \hat h(0)$, which should be zero and indeed are in all cases at least smaller than $10^{-9}$ in magnitude, and exactly zero for $k^5_{poly}$ and $k^5_{gauss}$. We argue that the non-zero values are due to numerical inaccuracies when solving the system \eqref{eq:thelinearsystem}, and they should be considered numerical zeros. Indeed, the interpolation error $\max_{x_i\in X_{\hat N}}  |\hat h(x_i) - y_i|$ on the training set, which is shown in the third column of the table, is several order of magnitudes larger than these non-zero values.

We further verify if the approximated manifolds can be used to derive information on the asymptotic stability of the reduced system. The reduced system in this case reads
\begin{align*}
\dot x = x \hat h(x),
\end{align*}
and thus $x=0$ is an equilibrium independently of the values of $\hat h(x)$. Since the algebraic approximation \eqref{eq:h_alg_1} is correct up to an error of $\mathcal O(x^5)$, we may assume that this equilibrium is asymptotically stable, since the right hand side is negative on the left of $x=0$ and positive on the right. Under this assumption, we may conclude that an approximant $\hat h$ provides the correct characterization of the stability of the equilibrium of the reduced system if it is strictly negative in a neighborhood of zero, and zero only in zero. Looking at the fourth and first columns of Table \ref{tab:example1_values}, it is the case that $\max_{x\in X_{test}\setminus\{0\}} \hat h(x)< 0$ and $\hat h (0) \leq 0$ in all cases except for $\hat h^6_{poly}$, and thus these four approximants are sufficiently accurate to predict the stability of the equilibrium. In the case of $\hat h^6_{poly}$ we have still that $\max_{x\in X_{test}\setminus\{0\}} \hat h(x)< 0$, but the approximant is positive in zero. Since it is continuous, a finer test grid would find non negative values also for $x\neq 0$, and thus no stability is found in this case.

\begin{table}[ht!]
\begin{center}
\begin{tabular}{|l|c|c|c||c|}
\hline
& $\hat h(0)$ & $D \hat h(0)$ & $\max_{x_i\in X_{\hat N}}  |\hat h(x_i) - y_i|$ & $\max_{x\in X_{test}\setminus\{0\}} \hat h(x)$\\
\hline
$\hat h_{poly}^4$   &-3.73e-09 & -1.04e-10 & 4.72e-07&  -3.88e-08\\
$\hat h_{poly}^5$   &0 & -8.00e-10 & 5.95e-07&  -2.88e-08\\
$\hat h_{poly}^6$   &\phantom{-}3.73e-09 & \phantom{-}5.82e-10 & 5.89e-07&  -2.78e-08\\
$\hat h_{Gauss}^1$ &-1.46e-11 & -1.36e-12 & 5.17e-06& -4.01e-08\\
$\hat h_{Gauss}^5$ &0 & -6.82e-12 & 1.52e-06&  -3.99e-08\\
\hline
\end{tabular}
\end{center}
\caption{Values of the approximated center manifolds and of their derivatives in zero and on the test set, for various choices of the kernel and its parameter.}\label{tab:example1_values}
\end{table}

Finally, we expand the approximants as Taylor series of order $4$ to draw a further comparison to the results given by \cite{roberts_software}. These expansions are obtained by computing explicitly the expansions of the kernel terms in \eqref{eq: representer thm} and then numerically computing the corresponding linear combinations. The coefficients of the Taylor expansions are given in Table \ref{tab:example1_taylor}. The accuracy also in this case is very good, even if it is degrading as the degree of the monomials increases. For the first three terms the best approximation is provided by the approximants using the Gaussian kernel.

\begin{table}[ht!]
\begin{center}
\begin{tabular}{|l|c|c|c|c|c|}
\hline
&\multicolumn{5}{c|}{Monomial}\\
\hline
& $1$ & $x$ & $x^2$ & $x^3$ & $x^4$ \\
\hline
$\hat h_{poly}^4$   &-3.73e-09           & -1.04e-10 & -9.99e-01&  \phantom{-}1.95e-04& -2.19e+00\\
$\hat h_{poly}^5$   &0                   & -5.82e-10 & -9.99e-01&  -1.97e-04          & -2.20e+00\\
$\hat h_{poly}^6$   &\phantom{-}3.73e-09 & -1.86e-09 & -9.99e-01&  -1.73e-05          & -2.20e+00\\
$\hat h_{Gauss}^1$ &\phantom{-}1.46e-11 & -1.36e-12 & -1.00e+00&  -6.57e-04          & -1.86e+00\\
$\hat h_{Gauss}^5$ &0                   & -7.73e-12 & -9.98e-01&  \phantom{-}1.36e-03& -2.52e+00\\
\hline
$\hat h_{alg}$     &0         &        0  &        -1&          0&        -2\\
\hline
\end{tabular}
\end{center}
\caption{Coefficients of the Taylor expansion in $x=0$ of the approximated center manifolds for various choices of the kernel and its parameter, and for the algebraic approximant $\hat h_{alg}$.}\label{tab:example1_taylor}
\end{table}

\subsection{Example 2}

We consider the $2$-dimensional system
\begin{equation} \label{eq: Example 2 system}
 \begin{split}
  \dot{x} & = L_1x + N_1(x,y) = 0 -xy \\
  \dot{y} & = L_2y + N_2(x,y) = -y + x^2 - 2y^2.  
 \end{split}
\end{equation}
This example is of relevance for this paper especially because an exact center manifold $h(x) = x^2$ is explicitly known (see \cite{Roberts1985}, and \cite{Roberts2019} for a recent analysis of this derivation and a study of other properties of the system). Using the toolbox in \cite{roberts_software}, we get that $h(x)=x^{2}+\mathcal O (x^5)$.
% Input of the toolbox's web interface:
% (-u1 * u2, -u2 + u1^2 - 2 * u2^2)
% 0
% (1, 0)
% (1, 0)
% 5
% u1

For the kernel approximation this time we use the compactly supported Wendland kernel for space dimension $p_d=1$ and smoothness $2 p_k$, with $p_k=1$ (see \cite{Wendland1995a}) given by
\begin{align*}
    k_{wendland}(x,y) := (1 - |x-y|)_{+}^3(1+3|x-y|) := \max(0, 1 - |x-y|)^3(1+3|x-y|),
\end{align*}
whose RKHS is the Sobolev space $H^{p_d/2 + 1/2 + p_k}(\Omega) = H^2(\Omega)$. Therefore, we have $h \in \cH_{k_{wendland}}$ and hence we are able to apply the error theory of Section 
\ref{sec:kernel_based_cm} and Theorem \ref{th:error}. 

For this purpose we again run the $P$-Greedy algorithm with a tolerance of $\varepsilon_{tol} = 10^{-10}$ which results in a point set $X_{wendland} = 
\{x_1,\dots,x_{200}\}$ containing $\hat N = 200$ elements. Observe that these points are selected with an ordering corresponding to a sequentially improving 
covering of the full dataset (see \cite{DeMarchi2005}). 
To test the behavior of the error, we construct the full sequence of approximants $\hat h_{\hat N}$ corresponding to each of the incremental point sets $X_{\hat N} := 
\{x_1,\dots,x_{\hat N}\}$, $1\leq {\hat N}\leq 200$. We use the regularization value $\lambda:= 10^{-13}$, and for each approximant we compute the error 
\begin{align*}
    e_{\hat N} := \max\limits_{x \in X_{test}} |h(x) - \hat h_{\hat N}(x)|.
\end{align*}
This discrete error serves as a proxy of the maximum error in \eqref{eq:approx_error_bound}. The values of $e_{\hat N}$ are shown in the second row in Figure \ref{fig:example2}, and they clearly have a quadratic rate of decay to zero as a function of ${\hat N}$. Observe that this rate of convergence is significantly faster than the one predicted by Theorem \ref{th:error} and Remark \ref{rem:error_vs_n}, which is equal to $-s/d +1/d + 1/2 = -2 + 3/2 = -1/2$.

The pointwise error for the final approximant $\hat h_{200}$ is depicted instead in the first row in Figure \ref{fig:example2}, left, and it is below $2 \cdot 10^7$ in the entire interval. A plot of the error restricted to the smaller interval $[-0.01, 0.01]$, and computed by scaling $X_{test}$ to this interval, is shown in the right part of the the first row, and it is compared with the decay to zero of the functions $|x|^3$ and $|x|^4$. On a qualitative level, this figure seems to suggests that the error around zero (see Theorem \ref{th:data_based_cm1} and Theorem \ref{th:error}) is of order $q=3$, but not $q=4$.

\begin{figure}[ht]
\begin{subfigure}{.45\textwidth}  

\begin{tikzpicture}
    \begin{axis}[legend pos= north east,xlabel={x},ylabel={},title={},height=6cm,grid=both,grid style=dashed, scaled x ticks=false, x tick label style = 
{/pgf/number format/.cd,fixed,precision=5},xmin=-0.1,xmax=0.1, ymax=0.0000002]
      \addplot[mark=none,color=red,line width=1.5] table[col sep=comma, x=input, y=error]{example2_wendland_error.csv};
      \addlegendentry{$e_{200}$};
      \end{axis}
\end{tikzpicture}

\end{subfigure}
\hfill
\begin{subfigure}{.45\textwidth}  

\begin{tikzpicture}
    \begin{axis}[legend pos= north east,xlabel={x},ylabel={},title={},height=6cm,grid=both,grid style=dashed, scaled x ticks=false, x tick label style = 
{/pgf/number format/.cd,fixed,precision=5},xmin=-0.01,xmax=0.01,restrict y to domain=0:0.000000001]
      \addplot[mark=none,color=red,line width=1.5] table[col sep=comma, x=input, y=error]{example2_wendland_error_zoom.csv};
      \addlegendentry{$e_{200}$};
      \addplot[mark=none,dashed,color=blue,line width=1.5] table[col sep=comma, x=input, y=power_3]{example2_wendland_error_zoom.csv};
      \addlegendentry{$|x|^3$};
      \addplot[mark=none,dotted,color=green,line width=1.5] table[col sep=comma, x=input, y=power_4]{example2_wendland_error_zoom.csv};
      \addlegendentry{$|x|^4$};
      \end{axis}
\end{tikzpicture}

\end{subfigure}
\\
\begin{center}
\begin{subfigure}{.45\textwidth}  

\begin{tikzpicture}
    \begin{axis}[legend pos= north east,xlabel={$\hat N$},ylabel={},title={},height=6cm,grid=both,grid style=dashed, scaled x ticks=false, x tick label style = 
{/pgf/number format/.cd,fixed,precision=5},xmax=200,xmode=log,ymode=log]
      \addplot[mark=none,color=red,line width=1.5] table[col sep=comma, x=N, y=e_N]{example2_wendland_decay.csv};
      \addlegendentry{$e_{\hat N}$};
      \addplot[mark=none,color=blue,dashed,line width=1.5] table[col sep=comma, x=N, y=rate]{example2_wendland_decay.csv};
      \addlegendentry{$10 ^ {-2} {\hat N}^{-2}$};
      \end{axis}
\end{tikzpicture}

\end{subfigure}
\end{center}

\caption{Pointwise error over the domain $\Omega$ for the last approximant of the sequence, i.e., $\hat h_{200}$ (shown on two different intervals, first row), and decay of the maximum error $e_j$ with respect to the number of data points used in the construction of the approximant, compared with the rate $10^{-2} \hat N^{-2}$ (second row).}
\label{fig:example2}
\end{figure}
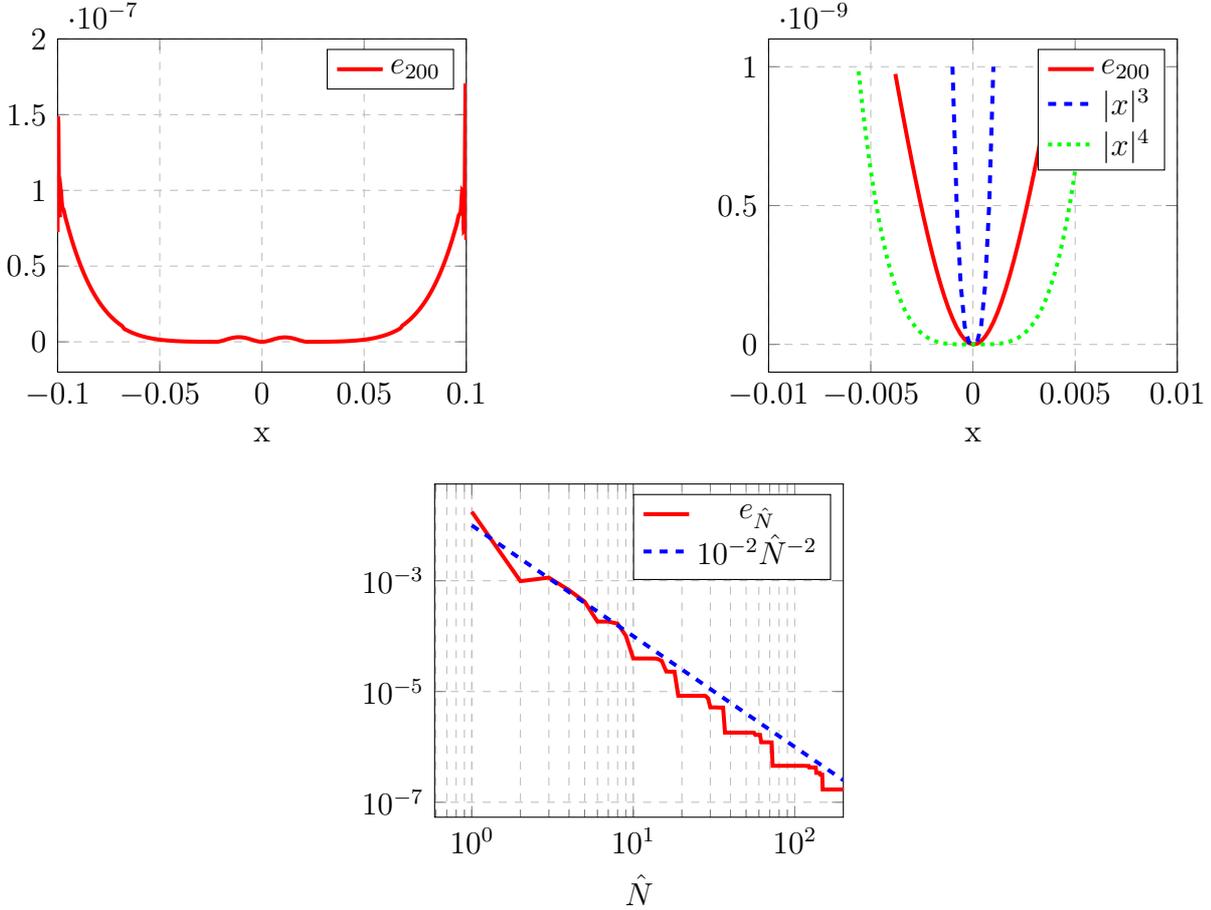

\subsection{Example 3}
We consider now the $(2+1)$-dimensional system
\begin{equation} \label{eq: Example 3 system}
 \begin{split}
  \dot{x} & = L_1 x + N_1(x,y) =
              \begin{pmatrix}
               0 & -1 \\ 1 & \phantom{-}0 
              \end{pmatrix}
              \begin{pmatrix}
               x_1 \\ x_2
              \end{pmatrix}
              + y
              \begin{pmatrix}
               x_1 \\ x_2 
              \end{pmatrix} \\
    \dot{y} & = L_2y + N_2(x,y) = -y -x_1^2 - x_2^2 + y^2.
 \end{split}
\end{equation}
In this case the toolbox \cite{roberts_software} returns an implicit parametrization of the manifold depending on the variables $t, s_1, s_2$, and given by the equations
\begin{align}\label{eq:alg_man_ex3}
\nonumber x_1 &= \frac{\sqrt{2}}{2}\left(e^{it} s_1 + e^{-it} s_2\right)\\
x_2 &= \frac{\sqrt{2}}{2} i \left(-e^{it}  s_1 + e^{-it}  s_2\right)\\
\nonumber y &= -2 s_1 s_2 -12 s_2^2 s_1^2.
\end{align}
This system requires some manipulation to be in the form that we are interested in, since it is not directly providing an expression of $y$ in terms of $x_1, x_2$. Observe that this is a limitation to the use of this methodology as a model order reduction technique.

To derive the expression for the manifold we invert the first two equations in \eqref{eq:alg_man_ex3}, which are linear and invertible, to obtain
\begin{align*}
s_1 &= \frac{\sqrt{2}}{2}\left(e^{-it} x_1 + i e^{-it} x_2\right)\\
s_2 &= \frac{\sqrt{2}}{2}\left( e^{it} x_1 - i e^{it} x_2\right),
\end{align*}
and these imply that $s_1 s_2 = \frac{1}{2}\left(e^{-it} x_1 + i e^{-it} x_2\right)\left( e^{it} x_1 - i e^{it} x_2\right) = \frac{1}{2}\left(x_1^2 + x_2^2\right)$. Inserting this relation into the third equation in \eqref{eq:alg_man_ex3} gives finally 
\begin{equation*}
y =  -2 s_1 s_2 -12 s_2^2 s_1^2 = - \left(x_1^2 + x_2^2\right) - 3 \left(x_1^2 + x_2^2\right)^2=:\hat h_{alg}(x_1, x_2),
\end{equation*}
where also in this case the approximation is limited to terms of degree at most $4$.
% Input of the toolbox's web interface:
% (-u2 + u1 * u3, u1 + u2 * u3, -u3 - u1^2 -u2^2 -u3^2)
% 1, -1
% (sqrt(2) / 2, -sqrt(2)/ 2 * i, 0), (sqrt(2) / 2, sqrt(2)/ 2 * i, 0)
% (sqrt(2) / 2, -i * sqrt(2) / 2, 0), (sqrt(2) / 2, i * sqrt(2) / 2, 0)
% 5
% u1, u2

The initial dataset to compute the kernel approximants in this case consists of $N = 78796$ data pairs. We use the kernels $k_{poly}^4(x,y) := (1 + x^Ty/2)^4$ and 
$k^{1/2}_{Gauss}(x,y) := e^{- \|x - y\|_2^2/2}$, and the greedy-selected sets with $\varepsilon_{tol}:=10^{-10}$ have size $\hat N = 10$ in both cases.

The approximations $\hat h_{poly}^4$, $\hat h_{Gauss}^{1/2}$, which use both $\lambda:=10^{-10}$, and their corresponding residuals $r_{poly}^4$, $r_{Gauss}^{1/2}$ can be seen in Figure \ref{fig: example 3}. The plots show a good accuracy for both kernels and among the two the polynomial kernel provides the best results. Observe that the error is small especially in a neighborhood of zero, as expected.

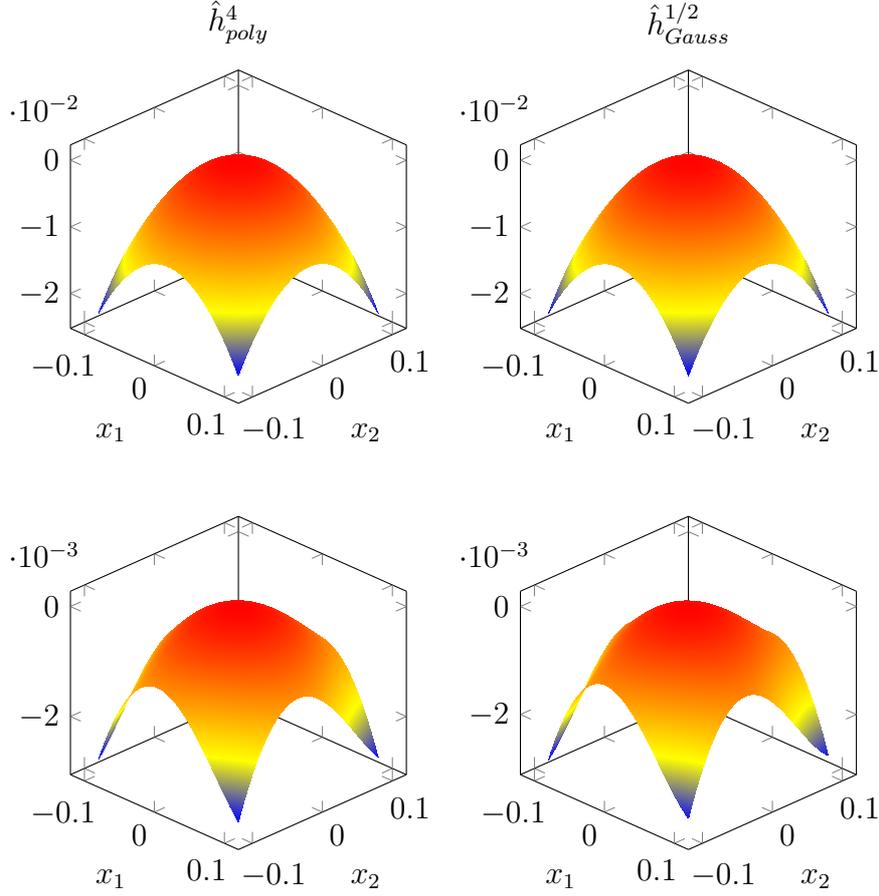
\begin{figure}[ht]
\begin{center}

\begin{tikzpicture}
\begin{groupplot}[
    group style={group size=2 by 2, vertical sep=1.5cm,horizontal sep=1.5cm},xmin=-0.12,xmax=0.12,ymin=-0.12,ymax=0.12, xlabel shift = -0.4cm, ylabel shift = 
-0.4cm
   ]

\nextgroupplot[
       title = {$\hat h_{poly}^4$},
       view/h=45,
       xlabel=$x_1$, 
       ylabel=$x_2$, 
%        zlabel = Approximation, 
       scaled x ticks=false, 
       scaled y ticks=false, 
       x tick label style = {/pgf/number format/.cd,fixed,precision=5}, 
       y tick label style = {/pgf/number format/.cd,fixed,precision=5},
       height=6cm, 
       width=6cm,
       ]
\addplot3[surf,mesh/ordering=y varies, shader=interp] table{example3_poly_app.csv};

\nextgroupplot[
       title = {$\hat h_{Gauss}^{1/2}$},
       view/h=45,
       xlabel=$x_1$, ylabel=$x_2$,
       scaled x ticks=false, scaled y ticks=false, x tick label style = {/pgf/number format/.cd,fixed,precision=5}, y tick label style = {/pgf/number 
format/.cd,fixed,precision=5}  ,height=6cm, width=6cm     
    ]
\addplot3[surf,mesh/ordering=y varies,
    shader=interp]
    table {example3_gauss_app.csv};
\nextgroupplot[
       view/h=45,
       xlabel=$x_1$, ylabel=$x_2$, 
       scaled x ticks=false, scaled y ticks=false, x tick label style = {/pgf/number format/.cd,fixed,precision=5}, y tick label style = {/pgf/number 
format/.cd,fixed,precision=5},height=6cm, width=6cm       
    ]
\addplot3[surf,mesh/ordering=y varies,
    shader=interp]
    table {example3_poly_res.csv};
\nextgroupplot[
       view/h=45,
       xlabel=$x_1$, ylabel=$x_2$,
       scaled x ticks=false, scaled y ticks=false, x tick label style = {/pgf/number format/.cd,fixed,precision=5}, y tick label style = {/pgf/number 
format/.cd,fixed,precision=5},height=6cm, width=6cm       
    ]
\addplot3[surf,mesh/ordering=y varies,
    shader=interp]
    table {example3_gauss_res.csv};
\end{groupplot}
\end{tikzpicture}

\caption{Approximations $\hat h_{poly}^4$ and $\hat h_{Gauss}^{1/2}$ of the center manifold (first row), and corresponding residuals $r_{poly}^4$ and $r_{Gauss}^{1/2}$ (second row), for the ODE \eqref{eq: Example 3 system}. \label{fig: example 3}}
\end{center}
\end{figure}

Following Theorem \ref{th:perturbation}, we now try to simulate the evolution of the system \eqref{eq: Example 3 system} using the approximated reduced dynamics instead of the exact reduced dynamics. Since we do not know the center manifold explicitly, the trajectories of the exact reduced system are obtained by solving the full system, and then considering only the first two components of each trajectory point.

In more details, we fix an initial condition $(x_1, x_2, y)_0$ (and its coordinate projection $(x_1, x_2)_0$) and we solve numerically the exact system \eqref{eq: Example 3 system} and the approximated system 
\begin{align*}
\dot x  & = \begin{pmatrix}
               0 & -1 \\ 1 & \phantom{-}0 
              \end{pmatrix}
              \begin{pmatrix}
               x_1 \\ x_2
              \end{pmatrix}
              + \hat h^{4}_{poly}(x_1, x_2)
              \begin{pmatrix}
               x_1 \\ x_2 
              \end{pmatrix},
\end{align*}
with the same setting used in all the experiments. For $t_i = t_0, t_0+\Delta t, \dots, T$ we obtain a trajectory $\{(x_1, x_2, y)_{t_i}\}_{t_i=t_0}^T$ of the full system, from which we consider the first two coordinates $\{(x_1, x_2)_{t_i}\}_{t_i=t_0}^T$, and a trajectory $\{(\hat x_1, \hat x_2)_{t_i}\}_{t_i=t_0}^T$ of the approximated and reduced system. 

We consider the two initial conditions $x_0^{repr}:=(x_1, x_2, y)_0 := (-0.8, -0.8, -0.8)$ and  $x_0^{gen}:=(x_1, x_2, y)_0 := (-0.4, 0, -0.8)$. The first one has been used for the training of $\hat h_{poly}^4$ while the second is new, and thus these are intended as a reproduction and a generalization experiment.

The corresponding exact and approximated numerical trajectories are shown in the first row of Figure \ref{fig:example3_trajectories}. It is clear that for $x_0^{repr}$ the evolution of the approximated system remains closer to the exact one than for $x_0^{gen}$, which is the expected behavior. Nevertheless, it is remarkable that in both cases the approximated trajectory converges to the same final state, which is the equilibrium $(0, 0)$, and thus the reduced accuracy of the generalization case is not relevant for the long term evolution of the system. 

This fact may be explainable by looking at the second row of Figure \ref{fig:example3_trajectories}, where we report the absolute and relative distance between the exact and reduced numerical trajectories in the two cases over the entire time interval. Indeed, the absolute error is decaying exponentially for both  $x_0^{repr}$ and $x_0^{gen}$ with a rate which is numerically estimated to be $e^{-0.05 t_i}$. This decay stops at a plateau value at around $t_i=400$. This exponential decay of the error is nevertheless driven by the fact that the norm $\sqrt{x_1^2 + x_2^2}$ of the true numerical solution is exponentially decaying with roughly the same rate, until it reaches a final value (the approximation of the equilibrium) at $t_i\approx400$. Indeed the relative error, for $t_i\leq 350$, is rapidly stabilizing to a constant value of $2.97\%$ (reproduction) and $20.65\%$ (generalization). Thus, a pretty large error in terms of approximation is in fact small in terms of solutions of the IVP.

This observation suggests once again that our approximation scheme may be used as an effective model order reduction technique, since an approximant computed using a few initial values is able to provide meaningful predictions for unseen parameters. This numerical observation would deserve further theoretical investigation.

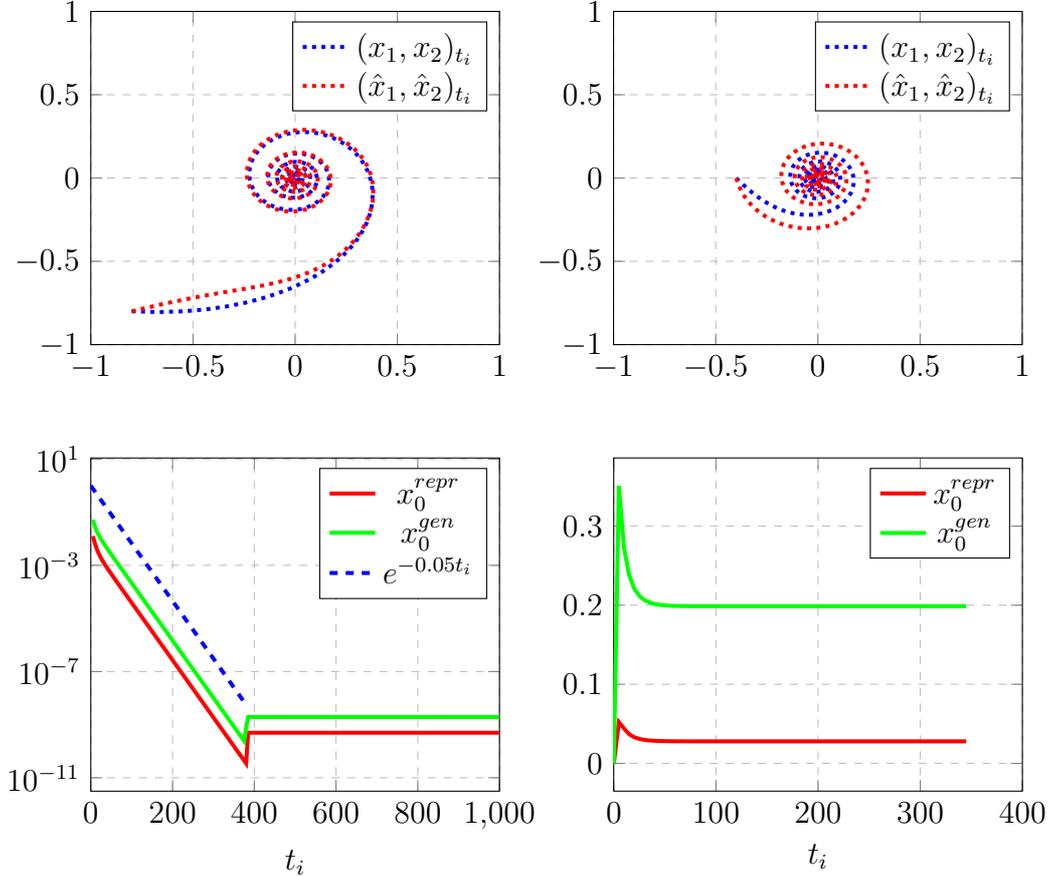
\begin{figure}[ht]
\begin{center}

\begin{tikzpicture}
\begin{groupplot}[
    group style={group size=2 by 2, vertical sep=1.5cm,horizontal sep=1.5cm}]

\nextgroupplot[legend pos= north east,title={},height=6cm,grid=both,grid style=dashed, scaled x ticks=false, x tick label style = 
{/pgf/number format/.cd,fixed,precision=5},xmin=-1,xmax=1, ymin=-1,ymax=1]
      \addplot[mark=none,color=blue,dotted,line width=1.5] table[col sep=comma, x=x_full, y=y_full]{example3_trajectory_reproduction.csv};
      \addlegendentry{$(x_1, x_2)_{t_i}$};
      \addplot[mark=none,color=red,dotted,line width=1.5] table[col sep=comma, x=x_app, y=y_app]{example3_trajectory_reproduction.csv};
      \addlegendentry{$(\hat x_1, \hat x_2)_{t_i}$};
 
\nextgroupplot[legend pos= north east,title={},height=6cm,grid=both,grid style=dashed, scaled x ticks=false, x tick label style = 
{/pgf/number format/.cd,fixed,precision=5},xmin=-1,xmax=1, ymin=-1,ymax=1]
      \addplot[mark=none,color=blue,dotted,line width=1.5] table[col sep=comma, x=x_full, y=y_full]{example3_trajectory_generalization.csv};
      \addlegendentry{$(x_1, x_2)_{t_i}$};
      \addplot[mark=none,color=red,dotted,line width=1.5] table[col sep=comma, x=x_app, y=y_app]{example3_trajectory_generalization.csv};
      \addlegendentry{$(\hat x_1, \hat x_2)_{t_i}$};
 
\nextgroupplot[legend pos= north east,xlabel={$t_i$},ylabel={},title={},height=6cm,grid=both,grid style=dashed, scaled x ticks=false, x tick label style = 
{/pgf/number format/.cd,fixed,precision=5},xmin=-0.1,xmax=1000,ymode=log]
      \addplot[mark=none,color=red,line width=1.5] table[col sep=comma, x=t, y=err_reproduction]{example3_trajectory_error_subsample.csv};
      \addlegendentry{$x_0^{repr}$};
      \addplot[mark=none,color=green,line width=1.5] table[col sep=comma, x=t, y=err_generalization]{example3_trajectory_error_subsample.csv};
      \addlegendentry{$x_0^{gen}$};
      \addplot[mark=none,color=blue,dashed,line width=1.5] table[col sep=comma, x=t_rate, y=rate]{example3_trajectory_rate_subsample.csv};
      \addlegendentry{$e^{-0.05 t_i}$};

\nextgroupplot[legend pos= north east,xlabel={$t_i$},ylabel={},title={},height=6cm,grid=both,grid style=dashed, scaled x ticks=false, x tick label style = 
{/pgf/number format/.cd,fixed,precision=5},xmin=-0.1,xmax=400]
      \addplot[mark=none,color=red,line width=1.5] table[col sep=comma, x=t, y=err_reproduction]{example3_trajectory_rel_error_subsample.csv};
      \addlegendentry{$x_0^{repr}$};
      \addplot[mark=none,color=green,line width=1.5] table[col sep=comma, x=t, y=err_generalization]{example3_trajectory_rel_error_subsample.csv};
      \addlegendentry{$x_0^{gen}$};
\end{groupplot}
\end{tikzpicture}
\end{center}
\caption{The first row shows numerical trajectories computed with the full system (blue curves) and with the reduced and approximated one (red curves) for an initial value $x_0^{repr}$ (left) and for an initial value $x_0^{gen}$ (right). The second row shows the evolution over time of the absolute error compared with the rate $e^{-0.05 t_i}$ (left, with logarithmic scaled) and of the relative error (right, with linear scale and restricted to $t_i\leq 350$) between the full and reduced trajectories.}
\label{fig:example3_trajectories} 
\end{figure}

As a final analysis, we compute the Taylor expansion in $(0, 0)$ of $h_{poly}^4$. Table \ref{tab:example3_taylor} reports the coefficients of the expansion for the monomials up to degree four, computed as in Section \ref{sec:example1}. The terms of degree up to $2$ have good agreement with $\hat h_{alg}$ (which is exact up to degree four). The terms of degree three should be zero, and they are of magnitude $10^{-3}$, while the accuracy is degrading for the highest order terms, where no clear pattern nor resemblance with the exact values can be seen.

\begin{table}[ht!]
\begin{center}
\begin{tabular}{|rr|rr|rr|rr|rr|}
\hline
% &\multicolumn{5}{c|}{Degree}\\
% \hline
\multicolumn{2}{|c|}{Degree $0$} & \multicolumn{2}{c|}{Degree $1$} & \multicolumn{2}{c|}{Degree $2$} & \multicolumn{2}{c|}{Degree $3$} & \multicolumn{2}{c|}{Degree $4$} \\
\hline
1&1.46e-11&$x_1$&-9.09e-13&$x_1^2$  &-1.13e+00&$x_1^3$    &-5.56e-03&$x_1^4$      &-1.37e-01\\
 &        &$x_2$&-1.55e-11&$x_1 x_2$&-1.33e-09&$x_1^2 x_2$&-1.35e-03&$x_1^3 x_2^1$& 9.18e-01\\
 &        &     &         &$x_2^2$  &-1.13e+00&$x_1 x_2^2$& 6.23e-03&$x_1^2 x_2^2$&-3.96e+00\\
 &        &     &         &         &         &$x_2^2$    &-1.36e-03&$x_1^1 x_2^3$&-9.18e-01\\
 &        &     &         &         &         &           &         &$x_2^4$      &-1.37e-01\\
\hline
\end{tabular}
\end{center}
\caption{Coefficients of the Taylor expansion in $x=0$ of the approximated center manifold $\hat h_{poly}^4$ up to degree four.}\label{tab:example3_taylor}
\end{table}

\section{Conclusions}
In this paper we proved a general data-based version of the main implication of the center manifold theorem, which allows to draw the same conclusions as the original theorem but examining a sufficiently accurate approximation in place of the true manifold. Additionally, such approximation can be used to evolve the system along approximated trajectories, and we proved bounds on their deviation from the exact ones.

Moreover, we revisited an algorithm that we introduced in a previous work for the kernel-based approximation of the center manifold. We added details on its definition and optimality, and especially we proved rigorous bounds on the error between this approximant and the true manifold, and discussed how to use these bounds in combination with the theorems mentioned above.

Extensive numerical experiments suggested that the present method can reach a significant accuracy, and that it has the potential to be used as an effective technique to analyze the stability of dynamical systems.

Several points remain open and would deserve further investigation. Mainly, establishing a connection between the exponent $q$ in Theorem \ref{th:data_based_cm1} and the full order dynamical system would allow a more systematic application of the results of this paper on different problems. Moreover, results on the generalization power of the method for the simulation of IVPs with initial values not used during training are missing.

\vspace{1cm}
\noindent\textbf{Acknowledgements:}
The first and fourth authors would like to thank the German Research Foundation (DFG) for support within the Cluster of Excellence in
Simulation Technology (EXC 310/2) at the University of Stuttgart. The second author   thanks the European Commission for financial support received through 
Marie Curie 
Fellowships. 

\bibliographystyle{abbrv}
\bibliography{biblio}

\end{document}